\documentclass[smallextended,referee,envcountsect]{svjour3}
\usepackage{booktabs}
\usepackage{amscd}
\smartqed
\usepackage{graphicx,amssymb}
\usepackage{enumerate,amsfonts}
\usepackage{amsmath}
\usepackage[bookmarksnumbered, plainpages, colorlinks]{hyperref}
\usepackage{multicol,multirow}
\usepackage{subfigure}
\usepackage{threeparttable}
\usepackage[top=2cm, bottom=2cm, left=2cm, right=2cm]{geometry}
\usepackage{float}
\usepackage[section]{placeins}
\usepackage{caption}
\usepackage[ruled,vlined]{algorithm2e} 
\usepackage{bigstrut}

\numberwithin{equation}{section}

\setlength{\topmargin}{-0.2cm}
\setlength{\textheight}{23cm}
\setlength{\textwidth}{15cm}
\setlength{\oddsidemargin}{0cm}
\setlength{\columnsep}{0cm}

\begin{document}
\baselineskip=16pt

\title{Self-adaptive-type CQ algorithms for split equality problems}

\titlerunning{Self-adaptive-type CQ algorithms}

\author{Songxiao Li$^{1}$ \and Bing Tan$ ^1 $ \and Zheng Zhou$ ^{1,*}$}

\authorrunning{S. Li, B. Tan, Z. Zhou}

\institute{$^*$Corresponding author.\at
Songxiao Li \at
\email{jyulsx@163.com}\\
Bing Tan \at
\email{bingtan72@gmail.com}\\
 Zheng Zhou \at
\email{zhouzheng2272@163.com}
\and
$^1$ Institute of Fundamental and Frontier Sciences, University of Electronic Science and Technology of China, Chengdu 611731, China}

\date{Received: date / Accepted: date}

\maketitle

\begin{abstract}
The purpose of this paper is concerned with the approximate solution of split equality problems. We introduce two types of algorithms and a new self-adaptive stepsize without prior knowledge of operator norms. The corresponding strong convergence theorems are obtained under mild conditions. Finally, some numerical experiments demonstrate the efficiency of our results and compare them with the existing results.

\keywords{Halpern-type algorithm \and viscosity-type algorithm \and self-adaptive stepsize \and split equality problem \and strong convergence theorem}
\subclass{47H10 \and 49M20 \and 65Y10 \and 90C25}
\end{abstract}

\section{Introduction}

As an extension of the split feasibility problem (for short, SFP), in 2013, Moudafi \cite{moudafi2013relaxed,moudafi2014alternating} introduced the following split equality problem (for short, SEP) that is applied to decomposition for PDEs \cite{attouch2008alternating} and intensity-modulated radiation therapy \cite{censor2006unified}. Let $H_1$, $H_2$ and $H_3$ be Hilbert spaces and $C\subset H_1$, $Q\subset H_2$ be nonempty closed convex subsets. Let $A:H_1\rightarrow H_3$ and $B:H_2\rightarrow H_3$ be bounded linear operators. The split equality problem is to find
\begin{equation}
\label{sep}
x^*\in C, y^*\in Q \ {\text{ such\ that}}\ \ Ax^*=By^*.
\end{equation}
In particular, when $B=I$ and $H_2=H_3$, the SEP can be considered as well-known the split feasibility problem (Censor and Elfving introduced in \cite{s2}), which is to find $x^*\in C$ such that $Ax^*\in Q$. Naturally, $x^*$ is a solution of the split feasibility problem if and only if $x^*$ is a solution of the equation $x^*=P_C(I-\gamma A^*(I-P_Q)A)x^*$, where $P_C:H_1\rightarrow C$ and $P_Q:H_2\rightarrow Q$ are metric projection operators, $A^*$ is the adjoint operator of $A$. By virtue of the fixed point algorithm, Byrne \cite{s1} came up with CQ algorithm to approximate a solution of the split feasibility problem by the recursive procedure $x_{n+1}=P_C(I-\gamma A^\mathrm{T}(I-P_Q)A)x_n$, where $A^\mathrm{T}$ is the matrix transposition of $A$ and $\gamma\in (0,2/L)$ with $L$ the largest eigenvalue of matrix $A^\mathrm{T}A$. Subsequently, Wang \cite{wang2017new}, Yao, Liou and Postolache \cite{yao2018self} studied the following new iterative algorithm
\begin{equation}
\label{nsep}
x_{n+1}=x_n-\gamma_n[(I-P_C)+ A^*(I-P_Q)A)]x_n,\ \forall n\geq 0,
\end{equation}
where $\{\gamma_n\}$ is a self-adaptive stepsize sequence without prior knowledge of
operator norms. It is worth noting that only the weak convergence of split feasibility problem were obtained by Byrne's algorithm and the algorithm \eqref{nsep}. To fill this gap, Halpern algorithms and viscosity algorithms were studied by many authors and were applied to various aspects, such as fixed point problems \cite{qin2009convergence,qin2008strong,syoaml2011,halpern1967fixed,luo2020,tan2020strong}, variational inequality problems \cite{ansariaa2020,Chang209for,choacta2012,chojia2013,kraikaew2014strong,takahahsifpt2018}, equilibrium problems \cite{takahashi2007viscosity}, split feasibility problems \cite{sahuo,xu2006variable}, and so on. Based on the idea of Halpern algorithm (Halpern introduced in \cite{halpern1967fixed}), Xu \cite{xu2006variable} proposed the following modified algorithm and obtained strong convergence of the split feasibility problem
\begin{equation}
\label{Hsep}
x_{n+1}=\alpha_nu+(1-\alpha_n)P_C(I-\gamma A^*(I-P_Q)A)x_n,
\end{equation}
where $\gamma$ is a constant in $(0, {2}/{\|A\|^2})$ and $u$ is a fixed point. Further, Takahashi \cite{takahashi2017mann,takahashi2018weak} proposed a modified Halpern algorithm, which uses a sequence $\{u_n\}$ converges strongly to $u$, and obtained the corresponding strong convergence theorems. The viscosity algorithm was introduced by Moudafi \cite{moudafi2000viscosity} in 2000, which uses a contraction mapping to approximate a solution of the fixed point problem.

On the other hand, to approximate the solutions of SEP, Moudafi \cite{moudafi2014alternating} presented the following alternating CQ algorithm (for short, ACQA)
\begin{equation}\label{ACQA}
\left\{
\begin{aligned}
&{x_{n+1}=P_C(x_n-\gamma_nA^*(Ax_n-By_n))},\\
&{y_{n+1}=P_Q(y_n+\gamma_nB^*(Ax_{n+1}-By_n)),}
\end{aligned}
\right.
\end{equation}
where $\{\gamma_n\}$ is a sequence in $(\varepsilon, \min\{\frac{1}{\|A\|^2}, \frac{1}{\|B\|^2}\}-\varepsilon)$ ($\varepsilon$ is a small enough nonnegative real number). Further, Byrne and Moudafi \cite{byrne2012extensions} came up with the following simultaneous CQ algorithm (for short, SCQA) to solve SEP, for $\varepsilon< \gamma_n < \frac{2}{\|A\|^2+\|B\|^2}-\varepsilon$,
\begin{equation}\label{SCQA}
\left\{
\begin{aligned}
&{x_{n+1}=P_C(x_n-\gamma_nA^*(Ax_n-By_n))},\\
&{y_{n+1}=P_Q(y_n+\gamma_nB^*(Ax_{n}-By_n)).}
\end{aligned}
\right.
\end{equation}

Due to expensive calculation of the projection operators $P_C$ and $P_Q$, Moudafi considered level sets to solve SEP, in which the level set of convex function is easy to implement, that is, $C$ and $Q$ are replaced with level sets of convex and subdifferentiable functions $\mathfrak{f}:H_1\rightarrow R$ and $\mathfrak{g}:H_2\rightarrow R$, respectively, i.e.,
$C=\{x\in H_1: \mathfrak{f}(x)\leq 0\}$, and $ Q=\{y\in H_2: \mathfrak{g}(y)\leq0\}$.
In this situation, Moudafi \cite{moudafi2013relaxed} put forward the relaxed alternating CQ algorithm (for short, RACQA)
\[
\left\{
\begin{aligned}
&{x_{n+1}=P_{C_n}(x_n-\gamma A^*(Ax_n-By_n))},\\
&{y_{n+1}=P_{Q_n}(y_n+\gamma B^*(Ax_{n+1}-By_n)),}
\end{aligned}
\right.
\]
where $\gamma$ is a constant in $(0, \min\{\frac{1}{\|A\|^2}, \frac{1}{\|B\|^2}\})$,
$C_n=\{x\in H_1|\mathfrak{f}(x_n)+\langle\xi_n,x-x_n\rangle\leq0\},\ \xi_n\in\partial \mathfrak{f}(x_n)$,
and $Q_n=\{y\in H_2|\mathfrak{g}(y_n)+\langle\eta_n,y-y_n\rangle\leq0\},\ \eta_n\in\partial \mathfrak{g}(y_n)$.
Especially, ACQA, SCQA and RACQA only got weak convergence properties of the split equality problem. Inspiration and motivation through these work in Moudafi \cite{moudafi2013relaxed,moudafi2014alternating,moudafi2000viscosity} and Takahashi \cite{takahashi2017mann,takahashi2018weak}, we introduce two modified self-adaptive-type iterative algorithms to solve the split equality problem \eqref{sep} in infinite Hilbert spaces by the Halpern algorithm and the viscosity algorithm. The corresponding strong convergence theorems are obtained without prior knowledge of operator norms. Furthermore, some numerical experiments are used to demonstrate and show the efficiency of our main results.

The present is built up as follows. Some basic properties and relevant lemmas will be introduced in Section \ref{sec2}, which will be used in the proof for the convergence of the proposed algorithms. The main results and some corollaries of this paper are contained in Sections \ref{se3} and \ref{se4}. The last section, in Section \ref{se5}, some numerical experiments demonstrate the efficiency of our results and compare them with the existing alternating CQ algorithm (ACQA) in Moudafi \cite{moudafi2014alternating}, simultaneous CQ algorithm (SCQA) in Byrne and Moudafi \cite{byrne2012extensions}, Dong, He and Zhao algorithm in \cite{dong2015solving}.

\section{Preliminaries}\label{sec2}

For the convenience and standard in the rest of this article, we use the notations $\rightarrow$ and $\rightharpoonup$ to represent strong convergence and weak convergence, respectively. The fixed point set of the mapping $T$ is represented by $F(T)$. Some well-known basic properties are as follows:
\begin{description}
	\item[\textbf{(P1)}] $P_C$ is denoted metric projection from $H$ onto $C$, that is, $P_Cx=\operatorname{argmin}_{y\in C} \|x-y\|,\ \forall x\in H$. It has such an equivalent form $ \langle P_Cx-x, P_Cx-y\rangle \le 0, \forall y \in C$, and can also be converted to $\|y-P_{C} x\|^{2}+\|x-P_{C} x\|^{2} \leq\|x-y\|^{2}$;
    \item[\textbf{(P2)}] The mapping $T:H\rightarrow H$ with $F(T)\neq\emptyset$ and $I-T$ is demiclosed at 0, i.e., for any sequence $\{x_n\}\in H$, if $\{x_n\}$ weakly converges to $x$ and $(I-T)x_n$ strongly converges to $0$, then $x\in F(T)$;

    \item[\textbf{(P3)}] The mapping $T:H\rightarrow H$ is a contraction with constant $\lambda$, that is,
    \[
    \|f(x)-f(y)\|\leq \lambda\|x-y\|,\ \forall x,y\in H,\ \lambda\in [0,1);
    \]
    \item[\textbf{(P4)}] $\partial \mathfrak{f}$ is denoted the subdifferential of convex function $\mathfrak{f}:H\rightarrow R$ at $x$, that is,
\[
\partial \mathfrak{f}(x)=\{\varpi: \mathfrak{f}(y)\ge \mathfrak{f}(x)+\langle \varpi,y-x\rangle,\ \forall y\in H\}.
\]
    \item[\textbf{(P5)}] For any $x,y\in H$, the following properties hold
    \[
    \|x+y\|^2\leq \|x\|^2+2\langle y, x+y\rangle;
    \]
    \[
    \|\kappa x+(1-\kappa)y\|^2=\kappa\|x\|^2+(1-\kappa)\|y\|^2-\kappa(1-\kappa)\|x-y\|^2,\ \forall \kappa\in R.
    \]
\end{description}

\begin{lemma}\label{lem1}
Let the solution set of the split equality problem \eqref{sep} is nonempty. For any $\gamma>0$, a solution of SEP is equivalent to a solution of the following equations
\begin{equation}\label{eqs}
\left\{\begin{aligned}
&x=x-\gamma\left((I-P_C)x+A^*(Ax-By)\right),\\
&y=y-\gamma\left((I-P_Q)y-B^*(Ax-By)\right).\end{aligned}\right.
\end{equation}
\end{lemma}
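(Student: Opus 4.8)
The plan is to first strip away the step size. Since $\gamma > 0$, each line of \eqref{eqs} is equivalent to setting its bracketed residual to zero, so the system collapses to
\begin{equation*}
(I-P_C)x + A^*(Ax - By) = 0, \qquad (I-P_Q)y - B^*(Ax - By) = 0.
\end{equation*}
The implication that a genuine SEP solution satisfies these is then immediate: if $x \in C$, $y \in Q$ and $Ax = By$, then $P_C x = x$, $P_Q y = y$ and $Ax - By = 0$, so both residuals vanish. The content of the lemma is the converse.

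For the converse I would exploit the standing hypothesis that SEP is solvable by fixing a reference solution $(\bar x, \bar y)$, i.e. $\bar x \in C$, $\bar y \in Q$ and $A\bar x = B\bar y$. Writing $w := Ax - By$, I would pair the first residual equation with $x - \bar x$ and the second with $y - \bar y$, then add. Using the adjoint identities $\langle A^* w, x - \bar x\rangle = \langle w, Ax - A\bar x\rangle$ and $\langle B^* w, y - \bar y\rangle = \langle w, By - B\bar y\rangle$ together with $A\bar x = B\bar y$, the two cross terms combine into $-\langle w,\, (Ax - By) - (A\bar x - B\bar y)\rangle = -\|w\|^2$. This yields the key identity
\begin{equation*}
\langle (I-P_C)x,\, x - \bar x\rangle + \langle (I-P_Q)y,\, y - \bar y\rangle = -\|Ax - By\|^2.
\end{equation*}

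To finish, I would bound the left side from below using the projection characterization in \textbf{(P1)}. Since $\bar x \in C$, the inequality $\langle P_Cx - x,\, P_Cx - \bar x\rangle \le 0$ gives $\langle (I-P_C)x,\, x - \bar x\rangle \ge \|(I-P_C)x\|^2$, and likewise for the $Q$-term. Hence the right side satisfies $-\|Ax - By\|^2 \ge \|(I-P_C)x\|^2 + \|(I-P_Q)y\|^2 \ge 0$, which forces $\|(I-P_C)x\|^2 + \|(I-P_Q)y\|^2 + \|Ax - By\|^2 = 0$. All three nonnegative summands therefore vanish, so $P_C x = x$, $P_Q y = y$ and $Ax = By$; that is, $(x,y)$ solves \eqref{sep}.

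The step I expect to carry the real weight is the converse, and in particular the recognition that the reference solution is indispensable: without it there is no way to convert the residual equations into the sign-definite identity above, and one can in fact construct configurations (e.g. singletons $C$, $Q$ whose images under $A$, $B$ are disjoint) where the residual system has solutions even though SEP has none. Everything else — the algebra of the adjoints and the single application of \textbf{(P1)} — is routine once the identity is in place.
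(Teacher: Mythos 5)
Your proposal is correct and follows essentially the same route as the paper's own proof: both fix a reference solution $(\bar x,\bar y)\in\Omega$, pair the zero residuals with $x-\bar x$ and $y-\bar y$, use the adjoint identities together with $A\bar x=B\bar y$ and the projection inequality \textbf{(P1)} to obtain $0\geq\|(I-P_C)x\|^2+\|(I-P_Q)y\|^2+\|Ax-By\|^2$, and conclude that all three terms vanish. Your closing observation that the nonemptiness hypothesis is genuinely needed (with the singleton counterexample) is a nice addition not made explicit in the paper, but the core argument is identical.
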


\begin{proof}
Obviously, any solution of the problem \eqref{sep} is the solution of equations \eqref{eqs}. On the other hand, put any element $(x,y)$ in the solution set of equations \eqref{eqs}, we have
\[
\left\{\begin{aligned}
&0=(I-P_C)x+A^*(Ax-By),\\
&0=(I-P_Q)y-B^*(Ax-By).\end{aligned}\right.
\]
For any $(x^*,y^*)$ in the solution set of SEP, that is, $x^*\in C$, $y^*\in Q$ and $Ax^*=By^*$, we get
\[
\begin{aligned}
0&=\langle (I-P_C)x+A^*(Ax-By),x-x^*\rangle\\
&=\langle x-P_Cx,x-P_Cx\rangle+\langle x-P_Cx,P_Cx-x^*\rangle+\langle Ax-By,Ax-Ax^*\rangle\\
&\geq \|x-P_Cx\|^2+\langle Ax-By,Ax-Ax^*\rangle,
\end{aligned}
\]
and
\[
\begin{aligned}
0=\langle (I-P_Q)y-B^*(Ax-By),y-y^*\rangle\geq\|y-P_Qy\|^2-\langle Ax-By,By-By^*\rangle.
\end{aligned}
\]
Combine the above two formulas we get $0\geq\|x-P_Cx\|^2+\|y-P_Qy\|^2+\|Ax-By\|^2$. This implies that $x\in C$, $y\in Q$ and $Ax=By$, i.e., $(x,y)$ is a solution of the split equality problems \eqref{sep}.
\end{proof}

\begin{lemma} \emph{\cite{he2013solving}}\label{lem2.5}
Let $\{\theta_{n}\}$ and $\{\eta_n\}$ be two nonnegative real numbers sequences such that
\[
\theta_{n+1}\leq (1-\delta_{n})\theta_{n}+\delta_n\tau_n, n\geq 0,
\]
and
\[
\theta_{n+1}\leq \theta_{n}-\eta_n+\zeta_n, n\geq 0,
\]
where $\{\tau_n\}$, $\{\zeta_n\}$ and $\{\delta_{n}\}$ are real sequences with $0< \delta_{n} <1$. If
\begin{itemize}
	\item $\sum_{n=0}^\infty {\delta_{n}}= \infty$ and  $\underset{n\rightarrow \infty}{\lim}\zeta_n=0$;
	\item $\underset{k\rightarrow \infty}{\lim}\eta_{n_k}=0$ implies $\underset{k\rightarrow \infty}{\limsup}\ \tau_{n_k}\leq0$ where $\{n_k\}$ is any real numbers subsequence of $\{n\}$.
\end{itemize}
The sequence $\{\theta_n\}$ is convergent to $0$ as $n\rightarrow \infty$.
\end{lemma}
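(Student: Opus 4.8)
The plan is to prove that $\theta_n\to 0$ by the classical ``two cases'' technique, using the two recursive inequalities in tandem: the first (the contraction-type estimate) is what ultimately drives $\theta_n$ to $0$, while the second (the descent-type estimate) is the only place where $\eta_n$ appears and is therefore the tool for activating the bridging hypothesis $\lim_k\eta_{n_k}=0\Rightarrow\limsup_k\tau_{n_k}\le 0$. Since $\theta_n\ge 0$, it suffices to show $\limsup_n\theta_n\le 0$.

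First I would treat the case in which $\{\theta_n\}$ is eventually non-increasing, say $\theta_{n+1}\le\theta_n$ for all $n\ge N$. Then $\{\theta_n\}$ is monotone and bounded below by $0$, so it converges to some $\ell\ge 0$ and $\theta_n-\theta_{n+1}\to 0$. Rearranging the second inequality gives $0\le\eta_n\le(\theta_n-\theta_{n+1})+\zeta_n$, and since the right-hand side tends to $0$ we obtain $\eta_n\to 0$ along the whole sequence. Feeding the full sequence into the bridging hypothesis yields $\limsup_n\tau_n\le 0$. Invoking the first inequality, $\theta_{n+1}\le(1-\delta_n)\theta_n+\delta_n\tau_n$ with $\sum\delta_n=\infty$ and $\limsup_n\tau_n\le 0$, the standard argument forces $\ell=0$: if $\ell>0$ then $\theta_{n+1}-\theta_n\le-\tfrac{1}{2}\delta_n\ell$ for large $n$, and summation against $\sum\delta_n=\infty$ would send $\theta_n\to-\infty$, contradicting $\theta_n\ge 0$.

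The substantive case is when $\{\theta_n\}$ is \emph{not} eventually non-increasing. Here I would introduce Maing\'e's index map $\sigma(n)=\max\{k\le n:\theta_k<\theta_{k+1}\}$, which for large $n$ is well defined and satisfies $\sigma(n)\to\infty$, $\theta_{\sigma(n)}\le\theta_{\sigma(n)+1}$, and $\theta_n\le\theta_{\sigma(n)+1}$. At the indices $\sigma(n)$ the increment is non-negative, so the second inequality gives $0\le\eta_{\sigma(n)}\le(\theta_{\sigma(n)}-\theta_{\sigma(n)+1})+\zeta_{\sigma(n)}\le\zeta_{\sigma(n)}\to 0$, whence $\eta_{\sigma(n)}\to 0$; the hypothesis applied to the subsequence $\{\sigma(n)\}$ then delivers $\limsup_n\tau_{\sigma(n)}\le 0$. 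Combining $\theta_{\sigma(n)}\le\theta_{\sigma(n)+1}$ with the first inequality at $\sigma(n)$ cancels the term $(1-\delta_{\sigma(n)})\theta_{\sigma(n)}$ and, after dividing by $\delta_{\sigma(n)}>0$, leaves $\theta_{\sigma(n)}\le\tau_{\sigma(n)}$, so $\limsup_n\theta_{\sigma(n)}\le 0$ and thus $\theta_{\sigma(n)}\to 0$. Reinserting this into the first inequality gives $\theta_{\sigma(n)+1}\to 0$, and the domination $\theta_n\le\theta_{\sigma(n)+1}$ finishes $\theta_n\to 0$.

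The main obstacle is this non-monotone case: the crux is recognizing that neither inequality alone suffices and that the two must be coupled through the hypothesis relating $\eta$ and $\tau$. Concretely, the delicate points are verifying the properties of the Maing\'e index $\sigma(n)$ (that it tends to infinity and that $\theta_{\sigma(n)+1}$ dominates $\theta_n$) and exploiting the sign of the increment $\theta_{\sigma(n)+1}-\theta_{\sigma(n)}\ge 0$ to \emph{simultaneously} drive $\eta_{\sigma(n)}\to 0$ from the descent estimate and collapse the contraction estimate to $\theta_{\sigma(n)}\le\tau_{\sigma(n)}$. Everything else is routine estimation, with $\theta_n\ge 0$ and $\sum\delta_n=\infty$ doing the remaining work.
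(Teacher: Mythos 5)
The paper itself offers no proof of this lemma: it is quoted directly from He and Yang \cite{he2013solving}, so there is no in-paper argument to compare against. Your proof is correct, and it is essentially the standard argument by which results of this type are established (including in the cited source): split into the case where $\{\theta_n\}$ is eventually non-increasing and the non-monotone case handled by Maing\'e's index $\sigma(n)=\max\{k\le n:\theta_k<\theta_{k+1}\}$, and in each case use the second (descent) inequality precisely to force $\eta_n\to 0$, respectively $\eta_{\sigma(n)}\to 0$, so that the bridging hypothesis yields $\limsup\tau\le 0$ along the relevant indices; the first (contraction) inequality together with $\sum_n\delta_n=\infty$ and $\theta_n\ge 0$ then closes both cases. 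All the individual steps check out: in Case 1, $\eta_n\le(\theta_n-\theta_{n+1})+\zeta_n\to 0$ and the summation contradiction rules out a positive limit; in Case 2, the sign of the increment at $\sigma(n)$ gives both $\eta_{\sigma(n)}\le\zeta_{\sigma(n)}\to 0$ and, after cancelling $(1-\delta_{\sigma(n)})\theta_{\sigma(n)}$ and dividing by $\delta_{\sigma(n)}>0$, the key bound $\theta_{\sigma(n)}\le\tau_{\sigma(n)}$, whence $\theta_{\sigma(n)}\to 0$, $\theta_{\sigma(n)+1}\to 0$, and $\theta_n\le\theta_{\sigma(n)+1}$ finishes. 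The only point worth tightening is that $\{\sigma(n)\}$ is non-decreasing with repetitions, hence not literally a subsequence as required by the hypothesis; to invoke the hypothesis verbatim, pass to the strictly increasing enumeration of the index set $\{k:\theta_k<\theta_{k+1}\}$ (which is infinite in Case 2), noting that repetitions do not affect any of the limits or limsups involved.
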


\section{Halpern-type CQ algorithms}\label{se3}

In this section, we propose two Halpern-type algorithms to approximate a solution of the split equality problem \eqref{sep}, and assume that the solution set of SEP \eqref{sep} is nonempty, i.e., $\Omega=\{(x^*,y^*)\in C\times Q| Ax^*=By^*\}\neq \emptyset$. In addition, the following assumptions are presupposed.
\begin{description}
	\item[\textbf{(A1)}] $H_1$, $H_2$ and $H_3$ are Hilbert spaces, and $C\subset H_1$, $Q\subset H_2$ are nonempty closed convex subsets;
	\item[\textbf{(A2)}] $A:H_1\rightarrow H_3$, $B:H_2\rightarrow H_3$ are bounded linear operators, $A^*$ and $B^*$ are the adjoint operators of $A$ and $B$, respectively;
    \item[\textbf{(A3)}] $\{u_n\}\subset H_1$, $\{v_n\}\subset H_2$ are two convergence sequences such that $u_n\rightarrow u$ and $v_n\rightarrow v$.
\end{description}

\subsection{Self-adaptive Halpern-type CQ algorithm (SHCQA)}
Through the aforementioned Halpern-type algorithm in \cite{takahashi2017mann,takahashi2018weak}, the iterative sequence $\{(x_n, y_n)\}$ of the split equality problem \eqref{sep} is generated by the following recursive procedure
\begin{equation}\label{CQ}
		\left\{\begin{aligned}
&{\widehat{x}_{n}=x_{n}-\gamma_n\left[(I-P_C)x_{n}+A^*(Ax_{n}-By_{n})\right],}\\
&{x_{n+1}=\delta_n u_n+(1-\delta_n)\widehat{x}_{n},}\\
&{\widehat{y}_{n}=y_{n}-\gamma_n\left[(I-P_Q)y_{n}-B^*(Ax_{n}-By_{n})\right],}\\
&{y_{n+1}=\delta_n v_n+(1-\delta_n)\widehat{y}_{n}, n\geq 0.}
\end{aligned}\right.
\end{equation}
\noindent The corresponding parameters satisfy the following restrictions
\begin{description}
    \item[\textbf{(R1)}] If $Ax_{n}\neq By_{n}$, the self-adaptive stepsize
\[
\gamma_n=\alpha_n\min\left\{1,\frac{\|Ax_n-By_n\|^2}{\|A^*(Ax_{n}-By_{n})\|^2+\|B^*(Ax_{n}-By_{n})\|^2}\right\}\ {\text{with}}\ \alpha_n\in(0,1).
\]
         Otherwise, the self-adaptive stepsize $\gamma_n=\alpha_n$;
    \item[\textbf{(R2)}] $\{\delta_n\}$ is real number sequence in $(0,1)$ such that $\lim_{n\rightarrow \infty}\delta_n=0$ and $\sum_{n=0}^{\infty}\delta_n=\infty$.
\end{description}

\begin{theorem}\label{sep1}
Given the assumptions {(A1)-(A3)} and conditions {(R1)-(R2)}. The iterative sequence $\{(x_n, y_n)\}$ generated by algorithm \eqref{CQ} converges strongly to $P_\Omega(u,v)\in \Omega$.
\end{theorem}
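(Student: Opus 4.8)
The plan is to recast everything in the product space $H_1\times H_2$, set $w_n=(x_n,y_n)$ and $w^*=P_\Omega(u,v)$, and write the iteration compactly as $\widehat{w}_n=w_n-\gamma_n G(w_n)$ followed by $w_{n+1}=\delta_n(u_n,v_n)+(1-\delta_n)\widehat{w}_n$, where $G(x,y)=\big((I-P_C)x+A^*(Ax-By),\,(I-P_Q)y-B^*(Ax-By)\big)$. By Lemma~\ref{lem1} the zeros of $G$ are exactly the points of $\Omega$. I would then apply Lemma~\ref{lem2.5} with $\theta_n=\|w_n-w^*\|^2=\|x_n-x^*\|^2+\|y_n-y^*\|^2$, so the whole proof reduces to producing the two recursions required there and verifying their hypotheses.

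First I would establish a quasi-nonexpansiveness estimate for the inner step. Expanding $\|\widehat{x}_n-x^*\|^2+\|\widehat{y}_n-y^*\|^2$ and repeating the inner-product computation from the proof of Lemma~\ref{lem1} (using $\langle x_n-P_Cx_n,x_n-x^*\rangle\ge\|(I-P_C)x_n\|^2$, the analogous bound over $Q$, and $Ax^*=By^*$), the linear term is bounded below by $R_n:=\|(I-P_C)x_n\|^2+\|(I-P_Q)y_n\|^2+\|Ax_n-By_n\|^2$. For the quadratic term I would use $\|a+b\|^2\le 2\|a\|^2+2\|b\|^2$ together with the defining inequalities of the stepsize, namely $\gamma_n\big(\|A^*(Ax_n-By_n)\|^2+\|B^*(Ax_n-By_n)\|^2\big)\le\alpha_n\|Ax_n-By_n\|^2$ and $\gamma_n\le\alpha_n$, to absorb $\gamma_n^2(\|g_1\|^2+\|g_2\|^2)$ into $2\alpha_n\gamma_n R_n$. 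This yields
\[
\|\widehat{w}_n-w^*\|^2\le\|w_n-w^*\|^2-2\gamma_n(1-\alpha_n)R_n,
\]
which in particular gives $\|\widehat{w}_n-w^*\|\le\|w_n-w^*\|$ and, by induction on the convex combination, boundedness of $\{w_n\}$ (here I use that $\{(u_n,v_n)\}$ is bounded by (A3)).

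The two recursions then follow quickly. Applying (P5) to $w_{n+1}-w^*=\delta_n\big((u_n,v_n)-w^*\big)+(1-\delta_n)(\widehat{w}_n-w^*)$, the bound $\|\widehat{w}_n-w^*\|\le\|w_n-w^*\|$, and $(1-\delta_n)^2\le 1-\delta_n$ gives
\[
\theta_{n+1}\le(1-\delta_n)\theta_n+\delta_n\tau_n,\qquad \tau_n=2\langle (u_n,v_n)-w^*,\,w_{n+1}-w^*\rangle,
\]
while convexity of $\|\cdot\|^2$ combined with the displayed estimate gives
\[
\theta_{n+1}\le\theta_n-2(1-\delta_n)\gamma_n(1-\alpha_n)R_n+\delta_n\|(u_n,v_n)-w^*\|^2,
\]
i.e.\ the second recursion with $\eta_n=2(1-\delta_n)\gamma_n(1-\alpha_n)R_n$ and $\zeta_n=\delta_n\|(u_n,v_n)-w^*\|^2$. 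Since $\sum\delta_n=\infty$ and $\delta_n\to0$ by (R2) and $\{(u_n,v_n)\}$ is bounded, the conditions $\sum\delta_n=\infty$ and $\zeta_n\to0$ of Lemma~\ref{lem2.5} hold.

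The crux is the remaining hypothesis: $\lim_k\eta_{n_k}=0\Rightarrow\limsup_k\tau_{n_k}\le0$. Since $\delta_n\to0$, $\eta_{n_k}\to0$ forces $\gamma_{n_k}(1-\alpha_{n_k})R_{n_k}\to0$; here I need the stepsize bounded below, which is where I would use that $\alpha_n$ stays away from $0$ and $1$ (from $\|A^*z\|^2+\|B^*z\|^2\le(\|A\|^2+\|B\|^2)\|z\|^2$ one gets $\gamma_n\ge\alpha_n\min\{1,(\|A\|^2+\|B\|^2)^{-1}\}$), so that $R_{n_k}\to0$, i.e.\ $\|(I-P_C)x_{n_k}\|$, $\|(I-P_Q)y_{n_k}\|$ and $\|Ax_{n_k}-By_{n_k}\|$ all tend to $0$. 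This makes $G(w_{n_k})\to0$ and hence $w_{n_k+1}-w_{n_k}\to0$, so it suffices to bound $\limsup_k\langle(u,v)-w^*,w_{n_k}-w^*\rangle$. Passing to a further subsequence that both attains this $\limsup$ and converges weakly to some $\bar w=(\bar x,\bar y)$, I would invoke demiclosedness of $I-P_C$ and $I-P_Q$ at $0$ (property (P2)) to get $\bar x\in C$, $\bar y\in Q$, and weak continuity of $A,B$ to pass $Ax_{n_k}-By_{n_k}\to0$ into $A\bar x=B\bar y$; thus $\bar w\in\Omega$, and the characterization $\langle(u,v)-w^*,\bar w-w^*\rangle\le0$ of $w^*=P_\Omega(u,v)$ closes the estimate. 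Combined with $(u_{n_k},v_{n_k})\to(u,v)$, this gives $\limsup_k\tau_{n_k}\le0$, and Lemma~\ref{lem2.5} yields $\theta_n\to0$, i.e.\ $(x_n,y_n)\to P_\Omega(u,v)$. I expect the weak-subsequential-limit step, together with the accompanying need to keep the adaptive stepsize bounded below, to be the main obstacle.
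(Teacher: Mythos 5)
Your proposal is correct and follows essentially the same route as the paper's own proof: the product-space notation is cosmetic, and the substance --- the quasi-nonexpansiveness estimate for the inner step, the two recursions fed into Lemma \ref{lem2.5}, and the demiclosedness/weak-subsequence argument closed by the projection characterization of $P_\Omega(u,v)$ --- coincides with the paper's argument step for step. The one point where you add something, namely requiring $\alpha_n$ to stay bounded away from $0$ and $1$ so that $\eta_{n_k}\to 0$ forces $R_{n_k}\to 0$, is not pedantry but a genuine necessity that condition (R1) omits and that the paper's proof silently assumes when deducing \eqref{limxy1} from $\Phi_{n_k}\to 0$ (indeed the paper's own experiments take $\alpha_n=\frac{3n}{3n+1}\to 1$, for which that deduction fails as stated).
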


\begin{proof}
Take $(x^*, y^*)=P_\Omega(u,v)\in \Omega$, that is, $x^*\in C$, $y^*\in Q$ and $Ax^*=By^*$. Using algorithm \eqref{CQ} and the nature of projection operator $P_C$ to get
\begin{equation}\label{zn}
\begin{aligned}
\|\widehat{x}_{n}-x^*\|^2&=\|x_{n}-x^*\|^2-2\gamma_n\langle (I-P_C)x_{n}+A^*(Ax_{n}-By_{n}), x_{n}-x^*\rangle\\
&\quad+\gamma_n^2\|(I-P_C)x_{n}+A^*(Ax_{n}-By_{n})\|^2\\
&\leq \|x_{n}-x^*\|^2-2\gamma_n\|(I-P_C)x_{n}\|^2-2\gamma_n\langle Ax_{n}-By_{n}, Ax_{n}-Ax^*\rangle\\
&\quad+2\gamma_n^2\left(\|(I-P_C)x_{n}\|^2+\|A^*(Ax_{n}-By_{n})\|^2\right).
\end{aligned}
\end{equation}
Similarly, the following inequality is available
\begin{equation}\label{sn}
\begin{aligned}
\|\widehat{y}_{n}-y^*\|^2&\leq \|y_{n}-y^*\|^2-2\gamma_n\|(I-P_Q)y_{n}\|^2+2\gamma_n\langle Ax_{n}-By_{n}, By_{n}-By^*\rangle\\
&\quad+2\gamma_n^2\left(\|(I-P_Q)y_{n}\|+\|B^*(Ax_{n}-By_{n})\|^2\right).
\end{aligned}
\end{equation}
On the other hand, we get
\[
2\langle Ax_{n}-By_{n},By_{n}-By^*\rangle-2\langle Ax_{n}-By_{n}, Ax_{n}-Ax^*\rangle=-2\|Ax_{n}-By_{n}\|^2.
\]
From the definition of $\gamma_n$, we have $\gamma_n(1-\gamma_n)>0$ and
\[
\gamma_n\|Ax_n-By_n\|^2-\gamma_n^2(\|A^*(Ax_{n}-By_{n})\|^2+\|B^*(Ax_{n}-By_{n})\|^2)\geq0.
\]
Combine formulas \eqref{zn}, \eqref{sn} and condition {{(R1)}} to get
\begin{equation}\label{wxy}
\begin{aligned}
&\quad\|\widehat{x}_{n}-x^*\|^2+\|\widehat{y}_{n}-y^*\|^2\\
&\leq \|x_{n}-x^*\|^2+\|y_{n}-y^*\|^2-2\gamma_n(1-\gamma_n)\left(\|(I-P_C)x_n\|^2+\|(I-P_Q)y_n\|^2\right)\\
&\quad -2\gamma_n\left(\|Ax_n-By_n\|^2-\gamma_n\|A^*(Ax_{n}-By_{n})\|^2-\gamma_n\|B^*(Ax_{n}-By_{n})\|^2\right)\\
&\leq \|x_{n}-x^*\|^2+\|y_{n}-y^*\|^2.
\end{aligned}
\end{equation}
For the convenience of writing, set
\[
\begin{aligned}
\Phi_n&=2\gamma_n(1-\gamma_n)\left(\|(I-P_C)x_n\|^2+\|(I-P_Q)y_n\|^2\right)+2\gamma_n\|Ax_n-By_n\|^2\\
&\quad -2\gamma_n^2\left(\|A^*(Ax_{n}-By_{n})\|^2+\|B^*(Ax_{n}-By_{n})\|^2\right).
\end{aligned}
\]
Obviously, $\Phi_n\geq0$. In addition, using the convexity of the squared norm and \eqref{wxy}, we have
\[
\|x_{n+1}-x^*\|^2+\|y_{n+1}-y^*\|^2\leq\delta_n\left(\|u_{n}-x^*\|^2+\|v_{n}-y^*\|^2\right)+(1-\delta_n)\left(\|x_{n}-x^*\|^2+\|y_{n}-y^*\|^2\right).
\]
Since $\{u_n\}$ and $\{v_n\}$ are convergence sequences, there exists a non-negative constant $G$ such that $\sup_{n\geq 0}\{\|u_{n}-x^*\|^2, \|v_{n}-y^*\|^2\}\leq {G}/{2}$. Let $\theta_{n}=\|x_{n}-x^*\|^2+\|y_{n}-y^*\|^2$. The above formula can be converted to
\begin{equation}\label{xy3}
\begin{aligned}
\theta_{n+1}\leq \delta_n\left(\|u_{n}-x^*\|^2+\|v_{n}-y^*\|^2\right)+(1-\delta_{n})\theta_{n}\leq \max\{G, \theta_n\}\leq \cdots \leq \max\{G, \theta_0\}.
\end{aligned}
\end{equation}
This implies that $\{\theta_{n}\}$ is bounded, that is, the sequences $\{x_n\}$ and $\{y_n\}$ are bounded. From property {(P5)}, we have
\[
\begin{aligned}
\|x_{n+1}-x^*\|^2&=\|\delta_n u_n+(1-\delta_n)\widehat{x}_n-x^*\|^2\\
&\leq \|(1-\delta_n)\left(\widehat{x}_n-x^*\right)\|^2+2\delta_n\langle u_n-x^*,x_{n+1}-x^*\rangle\\
&\leq (1-\delta_n)\|\widehat{x}_n-x^*\|^2+2\delta_n\langle u_n-x^*,x_{n+1}-x^*\rangle,
\end{aligned}
\]
\[
\|y_{n+1}-y^*\|^2\leq (1-\delta_n)\|\widehat{y}_n-y^*\|^2+2\delta_n\langle v_n-y^*,y_{n+1}-y^*\rangle.
\]
Combining the above two inequalities and \eqref{wxy}, we get
\begin{equation}\label{xyxy}
\begin{aligned}
\|x_{n+1}-x^*\|^2+\|y_{n+1}-y^*\|^2&\leq(1-\delta_n)\left(\|x_n-x^*\|^2+\|y_n-y^*\|^2\right)-(1-\delta_n)\Phi_n\\
&\quad+2\delta_n\left(\langle u_n-x^*,x_{n+1}-x^*\rangle+\langle v_n-y^*,y_{n+1}-y^*\rangle\right).
\end{aligned}
\end{equation}
For each $n\geq 0$, set
\begin{align*}
&\theta_n=\|x_n-x^*\|^2+\|y_n-y^*\|^2,\ \eta_n=(1-\delta_n)\Phi_n;\\
&\tau_n=2\left(\langle u_n-x^*,x_{n+1}-x^*\rangle+\langle v_n-y^*,y_{n+1}-y^*\rangle\right);\\
&\zeta_n=2\delta_n\left(\langle u_n-x^*,x_{n+1}-x^*\rangle+\langle v_n-y^*,y_{n+1}-y^*\rangle\right).
\end{align*}
Then, the above formula \eqref{xyxy} is reduced to the following inequalities:
\[
\theta_{n+1}\leq (1-\delta_{n})\theta_{n}+\delta_n\tau_n,\ n\geq 0,
\text{ and } \quad \theta_{n+1}\leq \theta_{n}-\eta_n+\zeta_n,\ n\geq 0.
\]
By the boundedness of $\{u_n\}$, $\{v_n\}$, $\{x_n\}$ and $\{y_n\}$, and condition {{(R2)}}, we see that $\underset{n\rightarrow \infty}{\lim}\zeta_n=0$ and $\sum_{n=0}^\infty\delta_n=\infty$. By virtue of Lemma \ref{lem2.5}, this proof remains to show that $\underset{k\rightarrow \infty}{\lim}\eta_{n_k}=0$ implies $\underset{k\rightarrow \infty}{\limsup}\ \tau_{n_k}\leq0$ for any subsequence of real numbers $\{n_k\}$ of $\{n\}$. Let $\{\eta_{n_k}\}$ be a any subsequence of $\{\eta_n\}$ such that $\underset{k\rightarrow \infty}{\lim}\eta_{n_k}=0$. If $Ax_{n}\neq By_{n}$, it follows from condition (R1) and $\Phi_n$ that
\begin{equation}\label{limxy1}
\underset{k\rightarrow \infty}{\lim}\|(I-P_C)x_{n_k}\|=\underset{k\rightarrow \infty}{\lim}\|(I-P_Q)y_{n_k}\|=\underset{k\rightarrow \infty}{\lim}\|Ax_{n_k}-By_{n_k}\|=0.
\end{equation}
From the boundedness of $\{x_n\}$ and $\{y_n\}$, there exists two subsequences $\{x_{n_{k_j}}\}$ of $\{x_{n_k}\}$ and $\{y_{n_{k_j}}\}$ of $\{y_{n_k}\}$ such that $x_{n_{k_j}}\rightharpoonup \bar{x}$, $y_{n_{k_j}}\rightharpoonup \bar{y}$ and
\[
\underset{k\rightarrow \infty}{\limsup}\ \langle u_{n_k}-x^*,x_{n_k}-x^*\rangle=\underset{j\rightarrow \infty}{\lim}\langle u_{n_{k_j}}-x^*, x_{n_{k_j}}-x^*\rangle,
\]
\[
\underset{k\rightarrow \infty}{\limsup}\ \langle v_{n_k}-y^*,y_{n_k}-y^*\rangle=\underset{j\rightarrow \infty}{\lim}\langle v_{n_{k_j}}-y^*, y_{n_{k_j}}-y^*\rangle.
\]
Since $I-P_C$ and $I-P_Q$ are demiclosed at $0$, from \eqref{limxy1}, we have $\bar{x}\in C$ and $\bar{y}\in Q$. In addition, it follows from the bounded linearity of $A$ and $B$ that $Ax_{n_{k_j}}-By_{n_{k_j}}\rightharpoonup A\bar{x}-B\bar{y}$. Using the weak lower semicontinuity of the squared norm, we have $
\|A\bar{x}-B\bar{y}\|^2\leq \underset{j\rightarrow \infty}{\liminf}\ \|Ax_{n_{k_j}}-By_{n_{k_j}}\|^2=0$,
which implies that $(\bar{x},\bar{y})\in \Omega$. On the other hand, if $Ax_{n}=By_{n}$, it is clear that we can also get the same result as above. In addition, from the property of projection and the strong convergence property of $\{u_n\}$ and $\{v_n\}$, it follows that
\begin{equation}\label{xnk1}
\underset{k\rightarrow \infty}{\limsup}\ \langle u_{n_k}-x^*,x_{n_k}-x^*\rangle=\underset{j\rightarrow \infty}{\lim}\langle u_{n_{k_j}}-x^*, x_{n_{k_j}}-x^*\rangle=\langle u-x^*,\bar{x}-x^*\rangle\leq 0,
\end{equation}
\begin{equation}\label{xnk2}
\underset{k\rightarrow \infty}{\limsup}\ \langle v_{n_k}-y^*,y_{n_k}-y^*\rangle=\underset{j\rightarrow \infty}{\lim}\langle v_{n_{k_j}}-y^*, y_{n_{k_j}}-y^*\rangle=\langle v-y^*,\bar{y}-y^*\rangle\leq 0.
\end{equation}
According to condition {(R1)} and \eqref{limxy1}, we have
\[
\begin{aligned}
\|\widehat{x}_{n_k}-x_{n_k}\|\leq \gamma_n(\|(I-P_C)x_{{n_k}}\|+\|A\|\|Ax_{{n_k}}-By_{{n_k}}\|)\rightarrow 0,
\end{aligned}
\]
\[
\|\widehat{y}_{n_k}-y_{n_k}\|\leq \gamma_n(\|(I-P_Q)y_{{n_k}}\|+\|B\|\|Ax_{{n_k}}-By_{{n_k}}\|)\rightarrow 0.
\]
Further, we have
\begin{equation}\label{xx1}
\|x_{{n_k}+1}-x_{n_k}\|\leq\delta_{n_k}\|u_{n_k}-x_{n_k}\|+(1-\delta_{n_k})\|\widehat{x}_{n_k}-x_{n_k}\|\rightarrow 0,
\end{equation}
\begin{equation}\label{xx2}
\|y_{{n_k}+1}-y_{n_k}\|\leq\delta_{n_k}\|v_{n_k}-y_{n_k}\|+(1-\delta_{n_k})\|\widehat{y}_{n_k}-y_{n_k}\|\rightarrow 0.
\end{equation}
From the formula \eqref{xnk1}, \eqref{xnk2}, \eqref{xx1} and \eqref{xx2}, we have $\underset{k\rightarrow \infty}{\limsup}\ \langle u_{n_k}-x^*,x_{n_k+1}-x^*\rangle\leq 0$ and  $\underset{k\rightarrow \infty}{\limsup}\ \langle v_{n_k}-y^*,y_{n_k+1}-y^*\rangle\leq 0.$
This implies that $\underset{k\rightarrow \infty}{\limsup}\ \tau_{n_k}\leq 0$. By virtue of {Lemma \ref{lem2.5}}, we obtain $\underset{n\rightarrow \infty}{\lim}\theta_n=0$, which implies that $(x_n,y_n)\rightarrow (x^*,y^*)$. \qed
\end{proof}

\begin{remark}\label{mark1}
	\begin{itemize}
	\item The sequences $\{u_n\}$ and $\{v_n\}$ in {{Theorem \ref{sep1}}} are easily chosen, for example, (1) the monotonically decreasing sequence $u_n=\frac{n^2}{(n+1)^2}u$; (2) the monotonically increasing sequence $u_n=\frac{(n+1)^2}{n^2}u$; (3) the non-monotonically convergent sequence $u_n=\frac{2n+(-1)^n}{2n}u$.
	\item In particular, when the sequences $\{u_n\}$ and $\{v_n\}$ are constant sequences, that is, $u_n\equiv u$ and $v_n\equiv v$, the Halpern-type algorithm is equal to the classical Halpern algorithm {\cite{halpern1967fixed}}.
	\end{itemize}
\end{remark}

Therefore, from Remark \ref{mark1}, we have the following corollary and the corresponding iterative algorithm. Assumptions (A1)-(A3) are  established, algorithm \eqref{CQ} in Theorem \ref{sep1} can be degraded to the following Halpern algorithm
\begin{equation}\label{HCQ}
		\left\{\begin{aligned}
&{\widehat{x}_{n}=x_{n}-\gamma_n\left[(I-P_C)x_{n}+A^*(Ax_{n}-By_{n})\right],}\\
&{x_{n+1}=\delta_n u+(1-\delta_n)\widehat{x}_{n},}\\
&{\widehat{y}_{n}=y_{n}-\gamma_n\left[(I-P_Q)y_{n}-B^*(Ax_{n}-By_{n})\right],}\\
&{y_{n+1}=\delta_n v+(1-\delta_n)\widehat{y}_{n}, n\geq 0.}
\end{aligned}\right.
\end{equation}

\begin{corollary}\label{cor1}
If conditions {(R1)-(R2)} are also satisfied, the iterative sequence $\{(x_n, y_n)\}$ generated by algorithm \eqref{HCQ}  strongly converges to $P_\Omega(u,v)\in \Omega$.
\end{corollary}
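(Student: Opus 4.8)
The plan is to obtain Corollary \ref{cor1} as a direct specialization of Theorem \ref{sep1}, observing that algorithm \eqref{HCQ} is precisely algorithm \eqref{CQ} in which the driving sequences are taken to be the constant sequences $u_n\equiv u$ and $v_n\equiv v$. First I would check that this choice still meets assumption (A3): a constant sequence in a Hilbert space converges trivially, so $u_n\rightarrow u$ and $v_n\rightarrow v$. Consequently all of (A1)--(A3) remain in force, while conditions (R1)--(R2) are assumed by hypothesis. Thus every hypothesis of Theorem \ref{sep1} is satisfied with this particular instantiation.

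With the hypotheses verified, I would then simply invoke Theorem \ref{sep1}. Each step of its proof carries over after substituting $u_n=u$ and $v_n=v$: the boundedness estimate \eqref{xy3} still holds, now with the constant $G$ chosen so that $\sup_{n\geq 0}\{\|u-x^*\|^2,\|v-y^*\|^2\}\leq G/2$ (a single finite bound); the two recursive inequalities feeding into Lemma \ref{lem2.5} are literally unchanged; and the limit identities \eqref{xnk1}--\eqref{xnk2} simplify, since $u_{n_k}=u$ and $v_{n_k}=v$ hold for every $k$, so the intermediate passage invoking the strong convergence of $\{u_n\}$ and $\{v_n\}$ is no longer needed. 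The conclusion is that $\{(x_n,y_n)\}$ converges strongly to $P_\Omega(u,v)\in\Omega$.

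There is no genuine obstacle here: the corollary is a pure instantiation of the theorem, and taking the sequences constant actually simplifies the argument by removing any appeal to the convergence $u_n\rightarrow u$, $v_n\rightarrow v$, which now holds by definition. The only point worth recording is that $(x^*,y^*)=P_\Omega(u,v)$ remains the correct limit; this is because the key estimate $\underset{k\rightarrow\infty}{\limsup}\,\tau_{n_k}\leq 0$ in the proof of Theorem \ref{sep1} ultimately rests on the variational characterization of the projection onto the closed convex set $\Omega\subset H_1\times H_2$, namely $\langle u-x^*,\bar{x}-x^*\rangle+\langle v-y^*,\bar{y}-y^*\rangle\leq 0$ for every $(\bar{x},\bar{y})\in\Omega$, and this characterization is entirely unaffected by the choice of driving sequences. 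Hence the proof reduces to the single observation that constant sequences are admissible in Theorem \ref{sep1}.
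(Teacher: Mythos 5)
Your proposal is correct and matches the paper's own treatment: the paper derives Corollary \ref{cor1} exactly as you do, by noting (in Remark \ref{mark1}) that the constant sequences $u_n\equiv u$, $v_n\equiv v$ trivially satisfy assumption (A3), so algorithm \eqref{HCQ} is a special case of algorithm \eqref{CQ} and Theorem \ref{sep1} applies directly. Your added observation that the limit's characterization rests on the variational inequality $\langle u-x^*,\bar{x}-x^*\rangle+\langle v-y^*,\bar{y}-y^*\rangle\leq 0$ for the projection onto $\Omega$ is a sound (indeed slightly more careful) reading of the estimates \eqref{xnk1}--\eqref{xnk2}.
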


\subsection{Self-adaptive relaxed Halpern-type CQ algorithm (SRHCQA)}
Here, we consider using the level sets of two convex functions $\mathfrak{f}:H_1\rightarrow R$ and $\mathfrak{g}:H_2\rightarrow R$ instead of closed convex sets $C$ and $Q$ in Theorem \ref{sep1}, i.e., $C=\{x\in H_1: \mathfrak{f}(x)\leq 0\},\ Q=\{y\in H_2: \mathfrak{g}(y)\leq0\}$. For solving SEP \eqref{sep}, we construct the corresponding closed convex sets as follows.
\[
C_n=\{x\in H_1|\mathfrak{f}(x_n)+\langle\xi_n,x-x_n\rangle\leq0\},\ \xi_n\in\partial \mathfrak{f}(x_n),
\]
\[
Q_n=\{y\in H_2|\mathfrak{g}(y_n)+\langle\eta_n,y-y_n\rangle\leq0\},\ \eta_n\in\partial \mathfrak{g}(y_n).
\]
Note here that $\partial \mathfrak{f}$ and $\partial \mathfrak{g}$ are bounded sets. It is obvious that $C\subset C_n$ and $Q\subset Q_n$. Under the above conditions, using the method of Halpern-type algorithm to promote the relaxed alternating CQ algorithm, the iterative sequence $\{(x_n, y_n)\}$ of the split equality problem \eqref{sep} is generated by the following recursive procedure
\begin{equation}\label{RCQ}
		\left\{\begin{aligned}
&{\widehat{x}_{n}=x_{n}-\gamma_n\left[(I-P_{C_n})x_{n}+A^*(Ax_{n}-By_{n})\right],}\\
&{\widehat{y}_{n}=y_{n}-\gamma_n\left[(I-P_{Q_n})y_{n}-B^*(Ax_{n}-By_{n})\right],}\\
&{x_{n+1}=\delta_n u_n+(1-\delta_n)\widehat{x}_{n},}\\
&{y_{n+1}=\delta_n v_n+(1-\delta_n)\widehat{y}_{n},  n\geq 0.}
\end{aligned}\right.
\end{equation}
The corresponding parameters satisfy the following restrictions
\begin{description}
    \item[\textbf{(R3)}] If $Ax_{n}\neq By_{n}$, the self-adaptive stepsize
    \[
        \gamma_n=\alpha_n\min\left\{1,\frac{\|Ax_n-By_n\|^2}{\|A^*(Ax_{n}-By_{n})\|^2+\|B^*(Ax_{n}-By_{n})\|^2}\right\}\ \text{with}\ \alpha_n\in(0,1).
        \]
Otherwise, the self-adaptive stepsize $\gamma_n=\alpha_n$;
	\item[\textbf{(R4)}] $\{\delta_n\}$ is real number sequence in $(0,1)$ such that $\lim_{n\rightarrow \infty}\delta_n=0$ and $\sum_{n=0}^{\infty}\delta_n=\infty$.
\end{description}

\begin{theorem}\label{sep2}
Given the assumptions {{(A1)-(A3)}} and conditions {{(R3)-(R4)}}, the iterative sequence $\{(x_n, y_n)\}$ generated by algorithm \eqref{RCQ} converges strongly to $P_\Omega(u,v)\in \Omega$.
\end{theorem}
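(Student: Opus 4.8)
The plan is to follow the architecture of the proof of Theorem \ref{sep1} essentially verbatim, since algorithm \eqref{RCQ} differs from \eqref{CQ} only in replacing the fixed projections $P_C$, $P_Q$ by the relaxed projections $P_{C_n}$, $P_{Q_n}$. The only genuinely new ingredient is the identification of the weak cluster point, where demiclosedness of $I-P_C$ at $0$ is no longer available because the set changes with $n$.

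First I would fix $(x^*,y^*)=P_\Omega(u,v)$ and observe that, since $\mathfrak{f}$, $\mathfrak{g}$ are convex and $x^*\in C$, $y^*\in Q$, the defining inequalities of the half-spaces give $C\subset C_n$ and $Q\subset Q_n$ for every $n$; hence $x^*\in C_n$ and $y^*\in Q_n$. Consequently, the projection inequality in \textbf{(P1)} applies with $P_{C_n}$, $P_{Q_n}$ at the points $x^*$, $y^*$, and the estimates \eqref{zn}--\eqref{sn} hold verbatim with $P_C$, $P_Q$ replaced by $P_{C_n}$, $P_{Q_n}$. Combining them with the identity
\[
2\langle Ax_n-By_n, By_n-By^*\rangle - 2\langle Ax_n-By_n, Ax_n-Ax^*\rangle = -2\|Ax_n-By_n\|^2
\]
and the stepsize rule \textbf{(R3)} yields the same Fej\'er-type inequality as \eqref{wxy}, together with the same nonnegative quantity $\Phi_n$ (with $C_n$, $Q_n$ in place of $C$, $Q$). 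Boundedness of $\{x_n\}$, $\{y_n\}$ and the reduction to the two inequalities feeding Lemma \ref{lem2.5}---with the same $\theta_n$, $\eta_n=(1-\delta_n)\Phi_n$, $\tau_n$, $\zeta_n$---then go through unchanged.

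The heart of the argument, and the step I expect to be the main obstacle, is verifying the hypothesis of Lemma \ref{lem2.5}: along a subsequence with $\eta_{n_k}\to0$, I must show $\limsup_k\tau_{n_k}\le0$. As before, $\eta_{n_k}\to0$ forces $\|(I-P_{C_{n_k}})x_{n_k}\|\to0$, $\|(I-P_{Q_{n_k}})y_{n_k}\|\to0$, and $\|Ax_{n_k}-By_{n_k}\|\to0$; I then extract a further subsequence with $x_{n_{k_j}}\rightharpoonup\bar x$, $y_{n_{k_j}}\rightharpoonup\bar y$ realizing the relevant limit superiors. Since demiclosedness is unavailable, I would instead exploit the half-space structure: because $P_{C_{n_k}}x_{n_k}\in C_{n_k}$, its defining inequality gives
\[
\mathfrak{f}(x_{n_k})\le \langle \xi_{n_k}, x_{n_k}-P_{C_{n_k}}x_{n_k}\rangle \le \|\xi_{n_k}\|\,\|(I-P_{C_{n_k}})x_{n_k}\|.
\]
As $\partial\mathfrak{f}$ maps bounded sets to bounded sets and $\{x_{n_k}\}$ is bounded, $\{\|\xi_{n_k}\|\}$ is bounded, so $\limsup_k\mathfrak{f}(x_{n_k})\le0$; the weak lower semicontinuity of the convex function $\mathfrak{f}$ then yields $\mathfrak{f}(\bar x)\le\liminf_j\mathfrak{f}(x_{n_{k_j}})\le0$, i.e. $\bar x\in C$. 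The identical reasoning with $\mathfrak{g}$ and $\eta_{n_k}$ gives $\bar y\in Q$, and weak lower semicontinuity of $\|\cdot\|^2$ together with $\|Ax_{n_k}-By_{n_k}\|\to0$ gives $A\bar x=B\bar y$, so $(\bar x,\bar y)\in\Omega$.

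Finally, with $(\bar x,\bar y)\in\Omega$ established, I would close exactly as in Theorem \ref{sep1}: the projection characterization of $P_\Omega$ and the strong convergence $u_n\to u$, $v_n\to v$ yield $\langle u-x^*,\bar x-x^*\rangle\le0$ and $\langle v-y^*,\bar y-y^*\rangle\le0$; the estimates $\|\widehat x_{n_k}-x_{n_k}\|\to0$ and $\|\widehat y_{n_k}-y_{n_k}\|\to0$ (hence $\|x_{n_k+1}-x_{n_k}\|\to0$ and $\|y_{n_k+1}-y_{n_k}\|\to0$) upgrade these to $\limsup_k\langle u_{n_k}-x^*,x_{n_k+1}-x^*\rangle\le0$ and its $y$-analogue, whence $\limsup_k\tau_{n_k}\le0$. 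Lemma \ref{lem2.5} then gives $\theta_n\to0$, i.e. $(x_n,y_n)\to(x^*,y^*)=P_\Omega(u,v)$.
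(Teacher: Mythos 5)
Your proposal is correct and follows essentially the same route as the paper's own proof: carry over the Fej\'er-type estimates and the Lemma \ref{lem2.5} framework from Theorem \ref{sep1} (valid because $C\subset C_n$, $Q\subset Q_n$, so $x^*\in C_n$, $y^*\in Q_n$), then replace the demiclosedness argument by the subdifferential bound $\mathfrak{f}(x_{n_k})\le \langle \xi_{n_k}, x_{n_k}-P_{C_{n_k}}x_{n_k}\rangle\le \vartheta\|(I-P_{C_{n_k}})x_{n_k}\|\to 0$ together with weak lower semicontinuity of $\mathfrak{f}$, $\mathfrak{g}$ and of $\|\cdot\|^2$ to place the weak cluster point in $\Omega$. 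The only (minor) difference is that you spell out why $x^*\in C_n$ justifies the projection inequality, a point the paper treats as obvious from the inclusion $C\subset C_n$.
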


\begin{proof}
Take $(x^*, y^*)=P_\Omega(u,v)\in \Omega$, that is, $x^*\in C$, $y^*\in Q$ and $Ax^*=By^*$. As similar proof in {Theorem \ref{sep1}}, set $\theta_{n}=\|x_{n}-x^*\|^2+\|y_{n}-y^*\|^2$. Similarly we obtain that the sequences $\{x_n\}$ and $\{y_n\}$ are bounded. On the other hand, set
\[
\begin{aligned}
\Psi_n&=2\gamma_n(1-\gamma_n)(\|(I-P_{C_n})x_n\|^2+\|(I-P_{Q_n})y_n\|^2)+2\gamma_n\|Ax_n-By_n\|^2\\
&\quad -2\gamma_n^2\left(\|A^*(Ax_{n}-By_{n})\|^2+\|B^*(Ax_{n}-By_{n})\|^2\right).
\end{aligned}
\]
From the definition of the self-adaptive stepsize $\gamma_n$, we have $\gamma_n(1-\gamma_n)>0$ and
\[
\gamma_n\|Ax_n-By_n\|^2-\gamma_n^2(\|A^*(Ax_{n}-By_{n})\|^2+\|B^*(Ax_{n}-By_{n})\|^2)\geq0.
\]
Obviously, $\Psi_n\geq0$. Similarly, we have
\[
\begin{aligned}
\|x_{n+1}-x^*\|^2+\|y_{n+1}-y^*\|^2&\leq(1-\delta_n)\left(\|x_n-x^*\|^2+\|y_n-y^*\|^2\right)-(1-\delta_n)\Psi_n\\
&+2\delta_n\left(\langle u_n-x^*,x_{n+1}-x^*\rangle+\langle v_n-y^*,y_{n+1}-y^*\rangle\right).
\end{aligned}
\]
For each $n\geq 0$, we get
\begin{align*}
&\theta_n=\|x_n-x^*\|^2+\|y_n-y^*\|,\ \eta_n=(1-\delta_n)\Psi_n;\\
&\tau_n=2\left(\langle u_n-x^*,x_{n+1}-x^*\rangle+\langle v_n-y^*,y_{n+1}-y^*\rangle\right);\\
&\zeta_n=2\delta_n\left(\langle u_n-x^*,x_{n+1}-x^*\rangle+\langle v_n-y^*,y_{n+1}-y^*\rangle\right).
\end{align*}
Naturally, we have the following inequalities
\[
\theta_{n+1}\leq (1-\delta_{n})\theta_{n}+\delta_n\tau_n, n\geq 0,
\text{ and }\quad
\theta_{n+1}\leq \theta_{n}-\eta_n+\zeta_n, n\geq 0.
\]
By the boundedness of $\{u_n\}$, $\{v_n\}$, $\{x_n\}$ and $\{y_n\}$, and condition {(R4)}, we see that $\underset{n\rightarrow \infty}{\lim}\zeta_n=0$ and $\sum_{n=0}^\infty\delta_n=\infty$. By virtue of {Lemma \ref{lem2.5}}, this proof remains to show that $\underset{k\rightarrow \infty}{\lim}\eta_{n_k}=0$ implies $\underset{k\rightarrow \infty}{\limsup}\ \tau_{n_k}\leq0$ for any subsequence of real numbers $\{n_k\}$ of $\{n\}$. Let $\{\eta_{n_k}\}$ be a any subsequence of $\{\eta_n\}$ such that $\underset{k\rightarrow \infty}{\lim}\eta_{n_k}=0$. If $Ax_{n}\neq By_{n}$, it follows from condition {(R3)} and $\Psi_n$ that
\begin{equation}\label{limxy11}
\underset{k\rightarrow \infty}{\lim}\|(I-P_{C_n})x_{n_k}\|=\underset{k\rightarrow \infty}{\lim}\|(I-P_{Q_n})y_{n_k}\|=\underset{k\rightarrow \infty}{\lim}\|Ax_{n_k}-By_{n_k}\|=0.
\end{equation}
Using the boundedness of $\{x_n\}$ and $\{y_n\}$, there exists two sequences $\{x_{n_{k_j}}\}$ of $\{x_{n_k}\}$ and $\{y_{n_{k_j}}\}$ of $\{y_{n_k}\}$ such that $x_{n_{k_j}}\rightharpoonup \bar{x}$, $y_{n_{k_j}}\rightharpoonup \bar{y}$ and
\[
\underset{k\rightarrow \infty}{\limsup}\ \langle u_{n_k}-x^*,x_{n_k}-x^*\rangle=\underset{j\rightarrow \infty}{\lim}\langle u_{n_{k_j}}-x^*, x_{n_{k_j}}-x^*\rangle,
\]
\[
\underset{k\rightarrow \infty}{\limsup}\ \langle v_{n_k}-y^*,y_{n_k}-y^*\rangle=\underset{j\rightarrow \infty}{\lim}\langle v_{n_{k_j}}-y^*, y_{n_{k_j}}-y^*\rangle.
\]
Since $\partial \mathfrak{f}$ is bounded on bounded sets, there exists a constant $\vartheta> 0$ such that $\|\xi_{n_k}\|\leq \vartheta$, $\forall k\geq 0$. Using the definition of $C_{n}$ and \eqref{limxy11} to get
$\mathfrak{f}(x_{n_k})\leq \langle \xi_{n_k},x_{n_k}-P_{C_{n_k}}x_{n_k}\rangle \leq \vartheta\|x_{n_k}-P_{C_{n_k}}x_{n_k}\|\rightarrow 0$ as $k\rightarrow \infty$. By the function $\mathfrak{f}$ has weak lower semi-continuity, $\mathfrak{f}(\bar{x})\leq \underset{j\rightarrow \infty}{\liminf}\ \mathfrak{f}(x_{n_{k_j}})\leq 0$ can be obtained. So we have $\bar{x}\in C$. Similarly, we obtain $\bar{y}\in Q$. In addition, it follows from the bounded linearity of $A$ and $B$ that $Ax_{n_{k_j}}-By_{n_{k_j}}\rightharpoonup A\bar{x}-B\bar{y}$. By virtue of the weak lower semicontinuity of the squared norm, we have $\|A\bar{x}-B\bar{y}\|^2\leq \underset{j\rightarrow \infty}{\liminf}\ \|Ax_{n_{k_j}}-By_{n_{k_j}}\|^2=0$, which implies that $(\bar{x},\bar{y})\in \Omega$. On the other hand, if $Ax_{n}=By_{n}$, it is clearly that we can also get the same result as above. Last, using the proof process in {Theorem \ref{sep1}} and {Lemma \ref{lem2.5}}, we obtain $\underset{n\rightarrow \infty}{\lim}\theta_n=0$, this implies that $(x_n,y_n)\rightarrow (x^*,y^*)$. \qed
\end{proof}

According to Remark \ref{mark1} (ii), in the case where assumptions (A1)-(A3) are established, algorithm \eqref{RCQ} in Theorem \ref{sep2} can be degraded to the following Halpern algorithm
\begin{equation}\label{RCQ1}
		\left\{\begin{aligned}
&{\widehat{x}_{n}=x_{n}-\gamma_n\left[(I-P_{C_n})x_{n}+A^*(Ax_{n}-By_{n})\right],}\\
&{\widehat{y}_{n}=y_{n}-\gamma_n\left[(I-P_{Q_n})y_{n}-B^*(Ax_{n}-By_{n})\right],}\\
&{x_{n+1}=\delta_n u+(1-\delta_n)\widehat{x}_{n},}\\
&{y_{n+1}=\delta_n v+(1-\delta_n)\widehat{y}_{n},  n\geq 0.}
\end{aligned}\right.
\end{equation}

\begin{corollary}\label{cor2}
If conditions {{(R3)-(R4)}} are established, the iterative sequence $\{(x_n, y_n)\}$ generated by algorithm \eqref{RCQ1} converges strongly to $P_\Omega(u,v)\in \Omega$.
\end{corollary}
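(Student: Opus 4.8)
The plan is to obtain this corollary as an immediate specialization of Theorem \ref{sep2}, precisely along the lines flagged by Remark \ref{mark1}(ii). The first thing I would observe is that algorithm \eqref{RCQ1} is nothing other than algorithm \eqref{RCQ} with the driving sequences $\{u_n\}$ and $\{v_n\}$ replaced by the constant sequences $u_n\equiv u$ and $v_n\equiv v$. Indeed, the updates defining $\widehat{x}_n$ and $\widehat{y}_n$ (which carry all the split-equality structure through the relaxed sets $C_n$, $Q_n$, the operators $A$, $B$, and the self-adaptive stepsize $\gamma_n$) are literally identical in the two schemes; only the anchor terms $\delta_n u_n$ and $\delta_n v_n$ collapse to $\delta_n u$ and $\delta_n v$.

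Next I would verify that these constant sequences satisfy the hypotheses of Theorem \ref{sep2}. Assumption (A3) requires $\{u_n\}$ and $\{v_n\}$ to be convergent with $u_n\to u$ and $v_n\to v$; a constant sequence is trivially convergent to its constant value, so (A3) holds, and in particular $\{u_n\}$, $\{v_n\}$ are bounded, which is the only property of them used in the boundedness step of the proof of Theorem \ref{sep2}. Conditions (R3)-(R4) governing $\gamma_n$ and $\{\delta_n\}$ are assumed here unchanged, and assumptions (A1)-(A2) on the spaces and the bounded linear operators carry over verbatim. Thus every hypothesis of Theorem \ref{sep2} is met.

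Having confirmed that \eqref{RCQ1} is a legitimate instance of \eqref{RCQ}, I would simply invoke Theorem \ref{sep2} to conclude that $\{(x_n,y_n)\}$ converges strongly to $P_\Omega(u,v)\in\Omega$. There is essentially no obstacle: the entire content of the corollary is already contained in Theorem \ref{sep2}, and the only point requiring a one-line argument is the elementary observation that constant sequences satisfy the convergence requirement in (A3). If one preferred a self-contained argument, one could instead replay the proof of Theorem \ref{sep2} with $u_n=u$ and $v_n=v$ throughout, in which case the key limit inequalities $\langle u-x^*,\bar{x}-x^*\rangle\le 0$ and $\langle v-y^*,\bar{y}-y^*\rangle\le 0$ follow directly from $(\bar{x},\bar{y})\in\Omega$ and the projection characterization of $(x^*,y^*)=P_\Omega(u,v)$ in property (P1); but this merely reproduces the specialization just described.
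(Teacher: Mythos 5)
Your proposal is correct and is exactly the paper's (implicit) argument: the paper derives Corollary~\ref{cor2} by invoking Remark~\ref{mark1}(ii) to view algorithm~\eqref{RCQ1} as the special case of algorithm~\eqref{RCQ} with constant sequences $u_n\equiv u$, $v_n\equiv v$, which trivially satisfy (A3), and then applies Theorem~\ref{sep2}. No discrepancy to report.
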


\section{Viscosity-type CQ algorithms}\label{se4}
In this section, we propose two viscosity-type algorithms to approximate a solution of the split equality problems \eqref{sep}, and assume that the solution set of SEP \eqref{sep} is nonempty, i.e., $\Omega=\{(x^*,y^*)\in C\times Q| Ax^*=By^*\}\neq \emptyset$. In addition, the following assumptions are presupposed.
\begin{description}
	\item[\textbf{(V1)}] $H_1$, $H_2$ and $H_3$ are Hilbert spaces, and $C\subset H_1$, $Q\subset H_2$ are nonempty closed convex subsets;
	\item[\textbf{(V2)}] $A:H_1\rightarrow H_3$, $B:H_2\rightarrow H_3$ are bounded linear operators, and $A^*$, $B^*$ are the adjoint operators of $A$, $B$, respectively;
    \item[\textbf{(V3)}] $f:H_1\rightarrow H_1$ and $g:H_2\rightarrow H_2$ are contraction mappings with coefficient $\lambda_1\in [0,{1}/{\sqrt{2}})$, $\lambda_2\in [0,{1}/{\sqrt{2}})$, respectively.
\end{description}

\subsection{Self-adaptive viscosity-type CQ algorithm (SVCQA)}
According to the mentioned viscosity-type algorithm in Moudafi \cite{moudafi2000viscosity}, the iterative sequence $\{(x_n, y_n)\}$ of the split equality problem \eqref{sep} is generated by the following recursive procedure
\begin{equation}\label{VCQ}
		\left\{\begin{aligned}
&{\widehat{x}_{n}=x_{n}-\gamma_n\left[(I-P_C)x_{n}+A^*(Ax_{n}-By_{n})\right],}\\
&{x_{n+1}=\delta_n f(\widehat{x}_n)+(1-\delta_n)\widehat{x}_{n},}\\
&{\widehat{y}_{n}=y_{n}-\gamma_n\left[(I-P_Q)y_{n}-B^*(Ax_{n}-By_{n})\right],}\\
&{y_{n+1}=\delta_n g(\widehat{y}_n)+(1-\delta_n)\widehat{y}_{n}, n\geq 0.}
\end{aligned}\right.
\end{equation}
\noindent The corresponding parameters satisfy the following restrictions
\begin{description}
    \item[\textbf{(E1)}] If $Ax_{n}\neq By_{n}$, the self-adaptive stepsize
    \[\gamma_n=\alpha_n\min\left\{1,\frac{\|Ax_n-By_n\|^2}{\|A^*(Ax_{n}-By_{n})\|^2+\|B^*(Ax_{n}-By_{n})\|^2}\right\}\ \text{with}\ \alpha_n\in(0,1).
    \]
    Otherwise, the self-adaptive stepsize $\gamma_n=\alpha_n$;
    \item[\textbf{(E2)}] $\{\delta_n\}$ is real number sequence in $(0,1)$ such that $\lim_{n\rightarrow \infty}\delta_n=0$ and  $\sum_{n=0}^{\infty}\delta_n=\infty$.
\end{description}

\begin{theorem}\label{sep3}
Given the assumptions {{(V1)-(V3)}} and conditions {{(E1)-(E2)}}. The iterative sequence $\{(x_n, y_n)\}$ generated by algorithm \eqref{VCQ} converges strongly to $(x^*,y^*)=P_\Omega(f(x^*),g(y^*))$.
\end{theorem}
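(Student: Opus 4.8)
The plan is to follow the architecture of the proof of Theorem \ref{sep1} verbatim, the only genuinely new ingredient being that the fixed anchors $u_n,v_n$ are replaced by the contraction images $f(\widehat x_n),g(\widehat y_n)$. First I would settle the limit point itself. On $H_1\times H_2$ with $\|(x,y)\|^2=\|x\|^2+\|y\|^2$ the map $(x,y)\mapsto P_\Omega(f(x),g(y))$ is a contraction, because $P_\Omega$ is nonexpansive and $\|f(x_1)-f(x_2)\|^2+\|g(y_1)-g(y_2)\|^2\le\max\{\lambda_1^2,\lambda_2^2\}\big(\|x_1-x_2\|^2+\|y_1-y_2\|^2\big)$ with $\max\{\lambda_1,\lambda_2\}<1/\sqrt{2}<1$; Banach's theorem then gives a unique $(x^*,y^*)$ with $(x^*,y^*)=P_\Omega(f(x^*),g(y^*))$. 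Since the first and third lines of \eqref{VCQ} coincide with those of \eqref{CQ}, the estimates \eqref{zn}, \eqref{sn} and in particular $\|\widehat x_n-x^*\|^2+\|\widehat y_n-y^*\|^2\le\theta_n-\Phi_n\le\theta_n$, with the same nonnegative $\Phi_n$ as in the proof of Theorem \ref{sep1}, are available unchanged.

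The core of the argument is the averaging step. For boundedness I would pass to the product norm: writing $F(x,y)=(f(x),g(y))$, a contraction of constant $\lambda:=\max\{\lambda_1,\lambda_2\}$, the triangle inequality yields $\|(x_{n+1},y_{n+1})-(x^*,y^*)\|\le\big(1-\delta_n(1-\lambda)\big)\sqrt{\theta_n}+\delta_n\|F(x^*,y^*)-(x^*,y^*)\|$, and an induction gives $\sqrt{\theta_n}\le\max\{\sqrt{\theta_0},\,\|F(x^*,y^*)-(x^*,y^*)\|/(1-\lambda)\}$, so $\{x_n\},\{y_n\}$ are bounded. For the quantitative recursion I would apply property (P5) to $x_{n+1}-x^*=(1-\delta_n)(\widehat x_n-x^*)+\delta_n(f(\widehat x_n)-x^*)$, obtaining $\|x_{n+1}-x^*\|^2\le(1-\delta_n)^2\|\widehat x_n-x^*\|^2+2\delta_n\langle f(\widehat x_n)-x^*,x_{n+1}-x^*\rangle$, and then split $f(\widehat x_n)-x^*=\big(f(\widehat x_n)-f(x^*)\big)+\big(f(x^*)-x^*\big)$. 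The contraction part is absorbed by Young's inequality, $2\langle f(\widehat x_n)-f(x^*),x_{n+1}-x^*\rangle\le\lambda_1^2\|\widehat x_n-x^*\|^2+\|x_{n+1}-x^*\|^2$; summing the $x$- and $y$-inequalities, inserting $\|\widehat x_n-x^*\|^2+\|\widehat y_n-y^*\|^2\le\theta_n-\Phi_n$, and rearranging leaves the two inequalities required by Lemma \ref{lem2.5}, namely $\theta_{n+1}\le(1-\delta_n')\theta_n+\delta_n'\tau_n$ and $\theta_{n+1}\le\theta_n-\eta_n+\zeta_n$, where $\eta_n$ is a positive multiple of $\Phi_n$, $\zeta_n\to0$, and $\tau_n$ is a positive multiple of $\langle f(x^*)-x^*,x_{n+1}-x^*\rangle+\langle g(y^*)-y^*,y_{n+1}-y^*\rangle$. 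It is precisely in making $\delta_n'$ positive (equivalently, in keeping the coefficient of $\theta_n$ below one for $n$ large) that the hypothesis $\lambda_i<1/\sqrt{2}$, i.e. $2\lambda_i^2<1$, is consumed.

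Finally I would verify the residual condition of Lemma \ref{lem2.5}. Along any subsequence with $\eta_{n_k}\to0$ the structure of $\Phi_n$ forces $\|(I-P_C)x_{n_k}\|,\|(I-P_Q)y_{n_k}\|,\|Ax_{n_k}-By_{n_k}\|\to0$; passing to a weakly convergent sub-subsequence $x_{n_{k_j}}\rightharpoonup\bar x$, $y_{n_{k_j}}\rightharpoonup\bar y$ and using demiclosedness (P2) together with weak lower semicontinuity of the norm gives $(\bar x,\bar y)\in\Omega$, exactly as in Theorem \ref{sep1}. The decisive step is then the variational inequality for $P_\Omega$: from $(x^*,y^*)=P_\Omega(f(x^*),g(y^*))$ one has $\langle f(x^*)-x^*,\bar x-x^*\rangle+\langle g(y^*)-y^*,\bar y-y^*\rangle\le0$, and combining this with $\|x_{n_k+1}-x_{n_k}\|\to0$ and $\|y_{n_k+1}-y_{n_k}\|\to0$ yields $\limsup_{k\to\infty}\tau_{n_k}\le0$. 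Lemma \ref{lem2.5} then delivers $\theta_n\to0$, i.e. $(x_n,y_n)\to(x^*,y^*)$.

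I expect the main obstacle to lie in the averaging step rather than in the weak-limit analysis, which is essentially copied from Theorem \ref{sep1}. Because the anchor now moves with $n$, the term $\langle f(\widehat x_n)-x^*,x_{n+1}-x^*\rangle$ must be split and the contractive part reabsorbed, and one has to track how this interacts with the factor $2\lambda_i^2<1$ so that $\delta_n'\in(0,1)$ and $\sum_n\delta_n'=\infty$ survive. A second subtlety is that $\Omega=\{(x,y)\in C\times Q:Ax=By\}$ is not a product set, so only the coupled (summed) projection inequality above is legitimate; one must resist splitting it into two separate one-sided inequalities for the $x$- and $y$-components.
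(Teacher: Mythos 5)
Your proposal is correct, and at the level of architecture it is the same proof as the paper's: fix $(x^*,y^*)=P_\Omega(f(x^*),g(y^*))$, reuse the error functional $\Phi_n$ and the Fej\'er-type estimate \eqref{fxy}, establish boundedness, derive the two recursions demanded by Lemma \ref{lem2.5}, and run the identical weak-limit/demiclosedness analysis from Theorem \ref{sep1}. The differences are confined to three sub-steps, and the comparison is instructive. First, in the averaging step the paper does not rearrange via Young's inequality: after peeling off the anchor with (P5) it applies convexity of the squared norm to $\|\delta_n(f(\widehat{x}_n)-f(x^*))+(1-\delta_n)(\widehat{x}_n-x^*)\|^2$, which yields directly $\|x_{n+1}-x^*\|^2\le(1-\delta_n(1-\lambda_1^2))\|\widehat{x}_n-x^*\|^2+2\delta_n\langle f(x^*)-x^*,x_{n+1}-x^*\rangle$, with no $\|x_{n+1}-x^*\|^2$ on the right-hand side and no division by $1-\delta_n$; your Young-inequality variant works but requires exactly the extra bookkeeping you anticipate. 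Second, boundedness: the paper uses $\|a+b\|^2\le2\|a\|^2+2\|b\|^2$ and the max-argument \eqref{xyzsv1}, and that is precisely where $2\lambda^2<1$ is consumed; your triangle-inequality argument in the product norm needs only $\lambda<1$, so in your route the hypothesis $\lambda_i<1/\sqrt{2}$ is never actually essential --- your attribution of it to the averaging step is the one inaccuracy in your write-up, though a harmless one. Third, and to your credit, your insistence on the coupled variational inequality $\langle f(x^*)-x^*,\bar{x}-x^*\rangle+\langle g(y^*)-y^*,\bar{y}-y^*\rangle\le0$ is more rigorous than the paper's treatment, which asserts the two inequalities separately; since $\Omega$ is not a product set, only the summed inequality follows from the characterization (P1) of $P_\Omega$ on $H_1\times H_2$, and the clean procedure is, as you say, to extract the sub-subsequence realizing the limsup of the sum $\tau_{n_k}$ rather than of each term. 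Your Banach-fixed-point argument that $(x^*,y^*)$ exists and is unique is likewise a point the paper leaves implicit.
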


\begin{proof}
Take $(x^*, y^*)=P_\Omega(f(x^*),g(y^*))\in \Omega$, that is, $x^*\in C$, $y^*\in Q$ and $Ax^*=By^*$. From the proof of {Theorem \ref{sep1}}, we can get
\[
\begin{aligned}
\Phi_n&=2\gamma_n(1-\gamma_n)(\|(I-P_{C})x_n\|^2+\|(I-P_{Q})y_n\|^2)+2\gamma_n\|Ax_n-By_n\|^2\\
&\quad -2\gamma_n^2\left(\|A^*(Ax_{n}-By_{n})\|^2+\|B^*(Ax_{n}-By_{n})\|^2\right).
\end{aligned}
\]
From the definition of the self-adaptive stepsize $\gamma_n$, we know $\Phi_n\geq0$. We can also get
\begin{equation}\label{fxy}
\begin{aligned}
\|\widehat{x}_{n}-x^*\|^2+\|\widehat{y}_{n}-y^*\|^2=\|x_{n}-x^*\|^2+\|y_{n}-y^*\|^2-\Phi_n\leq \|x_{n}-x^*\|^2+\|y_{n}-y^*\|^2.
\end{aligned}
\end{equation}
Set $\lambda=\max\{\lambda_1, \lambda_2\}$. Using the convexity of the squared norm and \eqref{fxy}, we have
\begin{equation}\label{xyzsv}
\begin{aligned}
&\quad \|x_{n+1}-x^*\|^2+\|y_{n+1}-y^*\|^2\\
&\leq\delta_n\left(\|f(\widehat{x}_{n})-x^*\|^2+\|g(\widehat{y}_{n})-y^*\|^2\right)+(1-\delta_n)\left(\|\widehat{x}_{n}-x^*\|^2+\|\widehat{y}_{n}-y^*\|^2\right)\\
&\leq 2\delta_n\left(\|f(\widehat{x}_{n})-f(x^*)\|^2+\|f(x^*)-x^*\|^2+\|g(\widehat{y}_{n})-g(y^*)\|^2+\|g(y^*)-y^*\|^2\right)\\
&\quad +(1-\delta_n)\left(\|\widehat{x}_{n}-x^*\|^2+\|\widehat{y}_{n}-y^*\|^2\right)\\
&\leq 2\delta_n(\lambda_1^2\|\widehat{x}_n-x^*\|^2+\lambda_2^2\|\widehat{y}_n-y^*\|^2)+2\delta_n(\|f(x^*)-x^*\|^2+\|g(y^*)-y^*\|^2)\\
&\quad +(1-\delta_n)\left(\|\widehat{x}_{n}-x^*\|^2+\|\widehat{y}_{n}-y^*\|^2\right)\\
&=(1-\delta_n(1-2\lambda^2))\left(\|\widehat{x}_{n}-x^*\|^2+\|\widehat{y}_{n}-y^*\|^2\right)+2\delta_n(\|f(x^*)-x^*\|^2+\|g(y^*)-y^*\|^2)\\
&\leq (1-\delta_n(1-2\lambda^2))\left(\|x_{n}-x^*\|^2+\|y_{n}-y^*\|^2\right)+2\delta_n(\|f(x^*)-x^*\|^2+\|g(y^*)-y^*\|^2)\\
&\quad -(1-\delta_n(1-2\lambda^2))\Phi_n.
\end{aligned}
\end{equation}
Let $\theta_{n}=\|x_{n}-x^*\|^2+\|y_{n}-y^*\|^2$. Since $\lambda_1, \lambda_2\in [0,{1}/{\sqrt{2}})$ and by condition {(E1)}, the formula \eqref{xyzsv} can be converted to
\begin{equation}\label{xyzsv1}
\begin{aligned}
\theta_{n+1}&\leq (1-\delta_n(1-2\lambda^2))\theta_{n}+2\delta_n\left(\|f(x^*)-x^*\|^2+\|g(y^*)-y^*\|^2\right) -(1-\delta_n(1-2\lambda^2))\Phi_n\\
&\leq (1-\delta_n(1-2\lambda^2))\theta_{n}+2\delta_n(1-2\lambda^2)\frac{\|f(x^*)-x^*\|^2+\|g(y^*)-y^*\|^2}{1-2\lambda^2}\\
&\leq \max\{\theta_n, \frac{2(\|f(x^*)-x^*\|^2+\|g(y^*)-y^*\|^2)}{1-2\lambda^2}\}\\
&\leq \cdots \leq \max\{\theta_0, \frac{2(\|f(x^*)-x^*\|^2+\|g(y^*)-y^*\|^2)}{1-2\lambda^2}\}.
\end{aligned}
\end{equation}
This implies that $\{\theta_{n}\}$ is bounded, that is, the sequences $\{x_n\}$ and $\{y_n\}$ are bounded. On the other hand, using the property {(P5)}, we can obtain
\[
\begin{aligned}
\|x_{n+1}-x^*\|^2&=\|\delta_n f(\widehat{x}_n)+(1-\delta_n)\widehat{x}_n-x^*\|^2\\
&=\|\delta_n (f(\widehat{x}_n)-f(x^*))+\delta_n (f(x^*)-x^*)+(1-\delta_n)(\widehat{x}_n-x^*)\|^2\\
&\leq \|\delta_n (f(\widehat{x}_n)-f(x^*))+(1-\delta_n)(\widehat{x}_n-x^*)\|^2+2\delta_n \langle f(x^*)-x^*, x_{n+1}-x^*\rangle\\
&\leq \delta_n\|f(\widehat{x}_n)-f(x^*)\|^2+(1-\delta_n)\|\widehat{x}_n-x^*\|^2+2\delta_n\langle f(x^*)-x^*,x_{n+1}-x^*\rangle\\
&\leq (1-\delta_n(1-\lambda_1^2))\|\widehat{x}_n-x^*\|^2+2\delta_n\langle f(x^*)-x^*,x_{n+1}-x^*\rangle.
\end{aligned}
\]
Similarly,
\[
\|y_{n+1}-y^*\|^2\leq (1-\delta_n(1-\lambda_2^2))\|\widehat{y}_n-y^*\|^2+2\delta_n\langle g(y^*)-y^*,y_{n+1}-y^*\rangle.
\]
According to the above formulas, we get
\[
\begin{aligned}
\|x_{n+1}-x^*\|^2+\|y_{n+1}-y^*\|^2&\leq(1-\delta_n(1-\lambda^2))\left(\|\widehat{x}_n-x^*\|^2+\|\widehat{y}_n-y^*\|^2\right)\\
&\quad+2\delta_n\left(\langle f(x^*)-x^*,x_{n+1}-x^*\rangle+\langle g(y^*)-y^*,y_{n+1}-y^*\rangle\right)\\
&\leq(1-\delta_n(1-\lambda^2))\left(\|x_n-x^*\|^2+\|y_n-y^*\|^2\right)-(1-\delta_n(1-\lambda^2))\Phi_n\\
&\quad+2\delta_n\left(\langle f(x^*)-x^*,x_{n+1}-x^*\rangle+\langle g(y^*)-y^*,y_{n+1}-y^*\rangle\right).
\end{aligned}
\]
For each $n\geq 0$, we also set
\begin{align*}
\theta_n&=\|x_n-x^*\|^2+\|y_n-y^*\|^2,\ \eta_n=(1-\delta_n(1-\lambda^2))\Phi_n;\\
\tau_n&=2\left(\langle f(x^*)-x^*,x_{n+1}-x^*\rangle+\langle g(y^*)-y^*,y_{n+1}-y^*\rangle\right)/(1-\lambda^2);\\
\zeta_n&=2\delta_n\left(\langle f(x^*)-x^*,x_{n+1}-x^*\rangle+\langle g(y^*)-y^*,y_{n+1}-y^*\rangle\right).
\end{align*}
Then, the above formula is reduced to the following inequalities
\[
\theta_{n+1}\leq (1-\delta_{n})\theta_{n}+\delta_n\tau_n, n\geq 0
\text{ and }\
\theta_{n+1}\leq \theta_{n}-\eta_n+\zeta_n, n\geq 0.
\]
By the boundedness of $\{x_n\}$ and $\{y_n\}$, and condition {(E2)}, we see that $\underset{n\rightarrow \infty}{\lim}\zeta_n=0$ and $\sum_{n=0}^\infty\delta_n=\infty$. By virtue of{ Lemma \ref{lem2.5}}, this proof remains to show that $\underset{k\rightarrow \infty}{\lim}\eta_{n_k}=0$ implies $\underset{k\rightarrow \infty}{\limsup}\ \tau_{n_k}\leq0$ for any subsequence of real numbers $\{n_k\}$ of $\{n\}$. Let $\{\eta_{n_k}\}$ be a any subsequence of $\{\eta_n\}$ such that $\underset{k\rightarrow \infty}{\lim}\eta_{n_k}=0$. If $Ax_{n}\neq By_{n}$, it follows from condition (E1) and $\Phi_n$ that
\begin{equation}\label{limxy1v}
\underset{k\rightarrow \infty}{\lim}\|(I-P_C)x_{n_k}\|=\underset{k\rightarrow \infty}{\lim}\|(I-P_Q)y_{n_k}\|=\underset{k\rightarrow \infty}{\lim}\|Ax_{n_k}-By_{n_k}\|=0.
\end{equation}
By the boundedness of $\{x_n\}$ and $\{y_n\}$, there exists two sequences $\{x_{n_{k_j}}\}$ of $\{x_{n_k}\}$ and $\{y_{n_{k_j}}\}$ of $\{y_{n_k}\}$ such that $x_{n_{k_j}}\rightharpoonup \bar{x}$, $y_{n_{k_j}}\rightharpoonup \bar{y}$ and
\[
\underset{k\rightarrow \infty}{\limsup}\ \langle f(x^*)-x^*,x_{n_k}-x^*\rangle=\underset{j\rightarrow \infty}{\lim}\langle f(x^*)-x^*, x_{n_{k_j}}-x^*\rangle,
\]
\[
\underset{k\rightarrow \infty}{\limsup}\ \langle g(y^*)-y^*,y_{n_k}-y^*\rangle=\underset{j\rightarrow \infty}{\lim}\langle g(y^*)-y^*, y_{n_{k_j}}-y^*\rangle.
\]
Since $I-P_C$ and $I-P_Q$ are demiclosed at $0$, from \eqref{limxy1v}, we have $\bar{x}\in C$ and $\bar{y}\in Q$. In addition, it follows from bounded linearity of $A$ and $B$ that $Ax_{n_{k_j}}-By_{n_{k_j}}\rightharpoonup A\bar{x}-B\bar{y}$. Using the weak lower semicontinuity of the squared norm implies $\|A\bar{x}-B\bar{y}\|^2\leq \underset{j\rightarrow \infty}{\liminf}\ \|Ax_{n_{k_j}}-By_{n_{k_j}}\|^2=0$,
which implies that $(\bar{x},\bar{y})\in \Omega$. On the other hand, if $Ax_{n}=By_{n}$, it is clearly that we can also get the same result as above. In addition, from the property of projection, it follows that
\[
\underset{k\rightarrow \infty}{\limsup}\ \langle f(x^*)-x^*,x_{n_k}-x^*\rangle=\underset{j\rightarrow \infty}{\lim}\langle f(x^*)-x^*, x_{n_{k_j}}-x^*\rangle=\langle f(x^*)-x^*,\bar{x}-x^*\rangle\leq 0,
\]
\[
\underset{k\rightarrow \infty}{\limsup}\ \langle g(y^*)-y^*,y_{n_k}-y^*\rangle=\underset{j\rightarrow \infty}{\lim}\langle g(y^*)-y^*, y_{n_{k_j}}-y^*\rangle=\langle g(y^*)-y^*,\bar{y}-y^*\rangle\leq 0.
\]
According to condition {(E1)} and \eqref{limxy1v}, we have
\[
\begin{aligned}
\|\widehat{x}_{n_k}-x_{n_k}\|\leq \gamma_n(\|(I-P_C)x_{{n_k}}\|+\|A\|\|Ax_{{n_k}}-By_{{n_k}}\|)\rightarrow 0, n\rightarrow \infty,
\end{aligned}
\]
\[
\|\widehat{y}_{n_k}-y_{n_k}\|\leq \gamma_n(\|(I-P_Q)y_{{n_k}}\|+\|B\|\|Ax_{{n_k}}-By_{{n_k}}\|)\rightarrow 0, n\rightarrow \infty.
\]
Further, we get
\begin{equation}\label{xx1v}
\|x_{{n_k}+1}-x_{n_k}\|=\delta_{n_k}\|f(x_{n_k})-x_{n_k}\|+(1-\delta_{n_k})\|\widehat{x}_{n_k}-x_{n_k}\|\rightarrow 0, n\rightarrow \infty,
\end{equation}
\begin{equation}\label{xx2v}
\|y_{{n_k}+1}-y_{n_k}\|=\delta_{n_k}\|g(y_{n_k})-y_{n_k}\|+(1-\delta_{n_k})\|\widehat{y}_{n_k}-y_{n_k}\|\rightarrow 0, n\rightarrow \infty.
\end{equation}
Hence, we have $\underset{k\rightarrow \infty}{\limsup}\ \langle f(x^*)-x^*,x_{n_k+1}-x^*\rangle\leq 0$ and  $\underset{k\rightarrow \infty}{\limsup}\ \langle g(y^*)-y^*,y_{n_k+1}-y^*\rangle\leq 0$, which implies $\underset{k\rightarrow \infty}{\limsup}\ \tau_{n_k}\leq 0$. By {Lemma \ref{lem2.5}}, we obtain $\underset{n\rightarrow \infty}{\lim}\theta_n=0$. This implies that $(x_n,y_n)\rightarrow (x^*,y^*)$.\qed
\end{proof}

\subsection{Self-adaptive relaxed viscosity-type CQ algorithm (SRVCQA)}
In this section, we first set the same $C$, $Q$, $C_n$ and $Q_n$ as in Theorem \ref{sep2}. In addition, the combination of the relaxed CQ algorithm and the viscosity-type algorithm has the following algorithm
\begin{equation}\label{VRCQ}
		\left\{\begin{aligned}
&{\widehat{x}_{n}=x_{n}-\gamma_n\left[(I-P_{C_n})x_{n}+A^*(Ax_{n}-By_{n})\right],}\\
&{x_{n+1}=\delta_n f(\widehat{x}_n)+(1-\delta_n)\widehat{x}_{n},}\\
&{\widehat{y}_{n}=y_{n}-\gamma_n\left[(I-P_{Q_n})y_{n}-B^*(Ax_{n}-By_{n})\right],}\\
&{y_{n+1}=\delta_n g(\widehat{y}_n)+(1-\delta_n)\widehat{y}_{n}, n\geq 0.}
\end{aligned}\right.
\end{equation}
\noindent The corresponding parameters satisfy the following restrictions
\begin{description}
    \item[\textbf{(E3)}] If $Ax_{n}\neq By_{n}$, the self-adaptive stepsize
    \[
        \gamma_n=\alpha_n\min\left\{1,\frac{\|Ax_n-By_n\|^2}{\|A^*(Ax_{n}-By_{n})\|^2+\|B^*(Ax_{n}-By_{n})\|^2}\right\}\ \text{with}\ \alpha_n\in(0,1).
        \]
         Otherwise, the self-adaptive stepsize $\gamma_n=\alpha_n$;
    \item[\textbf{(E4)}] $\{\delta_n\}$ is real number sequence in $(0,1)$ such that $\lim_{n\rightarrow \infty}\delta_n=0$ and $\sum_{n=0}^{\infty}\delta_n=\infty$.
\end{description}

\begin{theorem}\label{sep4}
Given the assumptions {(V1)-(V3)} and conditions {(E3)-(E4)}. The iterative sequence $\{(x_n, y_n)\}$ generated by algorithm \eqref{VRCQ} strongly converges to $(x^*,y^*)=P_\Omega(f(x^*),g(y^*))$.
\end{theorem}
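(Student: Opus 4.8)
The plan is to graft the viscosity estimates of Theorem~\ref{sep3} onto the moving-set (level-set) machinery of Theorem~\ref{sep2}. First I would fix $(x^*,y^*)=P_\Omega(f(x^*),g(y^*))\in\Omega$, put $\lambda=\max\{\lambda_1,\lambda_2\}$ and $\theta_n=\|x_n-x^*\|^2+\|y_n-y^*\|^2$, and introduce
\[
\Psi_n=2\gamma_n(1-\gamma_n)\left(\|(I-P_{C_n})x_n\|^2+\|(I-P_{Q_n})y_n\|^2\right)+2\gamma_n\|Ax_n-By_n\|^2-2\gamma_n^2\left(\|A^*(Ax_n-By_n)\|^2+\|B^*(Ax_n-By_n)\|^2\right),
\]
which is nonnegative by the definition of $\gamma_n$ in (E3). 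Exactly as in the derivation of \eqref{fxy}, the projection property (P1) applied to $C_n,Q_n$ gives $\|\widehat{x}_n-x^*\|^2+\|\widehat{y}_n-y^*\|^2=\theta_n-\Psi_n\le\theta_n$. Feeding this into the viscosity computation \eqref{xyzsv}--\eqref{xyzsv1} verbatim (convexity of the squared norm together with $\|f(\widehat{x}_n)-f(x^*)\|\le\lambda_1\|\widehat{x}_n-x^*\|$ and the analogue for $g$, using $\lambda\in[0,1/\sqrt{2})$) yields the master recursion $\theta_{n+1}\le(1-\delta_n(1-2\lambda^2))\theta_n+2\delta_n(\|f(x^*)-x^*\|^2+\|g(y^*)-y^*\|^2)-(1-\delta_n(1-2\lambda^2))\Psi_n$, and dropping $\Psi_n\ge0$ and iterating proves $\{\theta_n\}$ bounded, hence $\{x_n\},\{y_n\}$ bounded.

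Next, expanding $\|x_{n+1}-x^*\|^2$ and $\|y_{n+1}-y^*\|^2$ through (P5) and peeling off the contraction part as in Theorem~\ref{sep3}, I would reach the two inequalities of Lemma~\ref{lem2.5},
\[
\theta_{n+1}\le(1-\delta_n)\theta_n+\delta_n\tau_n,\qquad \theta_{n+1}\le\theta_n-\eta_n+\zeta_n,
\]
with $\eta_n=(1-\delta_n(1-\lambda^2))\Psi_n$, $\tau_n=2(\langle f(x^*)-x^*,x_{n+1}-x^*\rangle+\langle g(y^*)-y^*,y_{n+1}-y^*\rangle)/(1-\lambda^2)$ and $\zeta_n=2\delta_n(\langle f(x^*)-x^*,x_{n+1}-x^*\rangle+\langle g(y^*)-y^*,y_{n+1}-y^*\rangle)$. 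Condition (E4) yields $\zeta_n\to0$ and $\sum_n\delta_n=\infty$, so by Lemma~\ref{lem2.5} everything reduces to the implication: for any subsequence with $\lim_k\eta_{n_k}=0$ one has $\limsup_k\tau_{n_k}\le0$.

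This implication is the crux, and it is the only place where the relaxed version genuinely departs from Theorem~\ref{sep3}. From $\eta_{n_k}\to0$ and the nonnegativity of each summand of $\Psi_n$ I would extract $\|(I-P_{C_n})x_{n_k}\|\to0$, $\|(I-P_{Q_n})y_{n_k}\|\to0$ and $\|Ax_{n_k}-By_{n_k}\|\to0$. In Theorem~\ref{sep3} the inclusions $\bar{x}\in C$, $\bar{y}\in Q$ came from demiclosedness of $I-P_C$; here that is unavailable, since $P_{C_n}$ projects onto half-spaces that change with $n$. The hardest point is therefore recovering membership in $C,Q$, and I would resolve it by importing the subdifferential argument of Theorem~\ref{sep2}: boundedness of $\partial\mathfrak{f}$ on bounded sets gives $\vartheta>0$ with $\|\xi_{n_k}\|\le\vartheta$, and the definition of $C_{n_k}$ forces $\mathfrak{f}(x_{n_k})\le\langle\xi_{n_k},x_{n_k}-P_{C_{n_k}}x_{n_k}\rangle\le\vartheta\|x_{n_k}-P_{C_{n_k}}x_{n_k}\|\to0$; passing to a weakly convergent subsequence $x_{n_{k_j}}\rightharpoonup\bar{x}$ and using weak lower semicontinuity of $\mathfrak{f}$ gives $\mathfrak{f}(\bar{x})\le\liminf_j\mathfrak{f}(x_{n_{k_j}})\le0$, so $\bar{x}\in C$, and symmetrically $\bar{y}\in Q$. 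Weak lower semicontinuity of the squared norm with $Ax_{n_{k_j}}-By_{n_{k_j}}\rightharpoonup A\bar{x}-B\bar{y}$ then forces $A\bar{x}=B\bar{y}$, i.e.\ $(\bar{x},\bar{y})\in\Omega$. Finally, selecting $\{n_{k_j}\}$ to realize $\limsup_k\langle f(x^*)-x^*,x_{n_k}-x^*\rangle$ and using $\|x_{n_k+1}-x_{n_k}\|\to0$ (a consequence of $\|\widehat{x}_{n_k}-x_{n_k}\|\to0$ and $\delta_{n_k}\to0$) to swap $x_{n_k+1}$ for $x_{n_k}$, the characterization $(x^*,y^*)=P_\Omega(f(x^*),g(y^*))$ gives $\langle f(x^*)-x^*,\bar{x}-x^*\rangle\le0$ and $\langle g(y^*)-y^*,\bar{y}-y^*\rangle\le0$, whence $\limsup_k\tau_{n_k}\le0$. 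Lemma~\ref{lem2.5} then delivers $\theta_n\to0$, i.e.\ $(x_n,y_n)\to(x^*,y^*)$.
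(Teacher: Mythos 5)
Your proposal is correct and is essentially the paper's own proof: the paper disposes of Theorem~\ref{sep4} in one line by saying it follows from combining the arguments of Theorem~\ref{sep2} and Theorem~\ref{sep3} via Lemma~\ref{lem2.5}, and your write-up carries out exactly that combination (viscosity recursion from Theorem~\ref{sep3}, subdifferential/level-set argument replacing demiclosedness from Theorem~\ref{sep2}). You have simply made explicit the details the paper leaves to the reader.
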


\begin{proof}
According to the proof of {Theorem \ref{sep2}} and {Theorem \ref{sep3}}, it follows from {Lemma \ref{lem2.5}} that the sequence $\{(x_n, y_n)\}$ converges strongly to  $(x^*,y^*)$.\qed
\end{proof}

\begin{remark}
Obviously, when the contraction mappings are constant mappings, that is, $f\equiv u$ and $g\equiv v$. This shows that the viscosity algorithm is equivalent to the Halpern algorithm.  It follows that the self-adaptive viscosity-type CQ algorithm (SVCQA) \eqref{VCQ} in {Theorem \ref{sep3}} is equivalent to the following Halpern algorithm
\begin{equation*}
		\left\{\begin{aligned}
&{\widehat{x}_{n}=x_{n}-\gamma_n\left[(I-P_C)x_{n}+A^*(Ax_{n}-By_{n})\right],}\\
&{x_{n+1}=\delta_n u+(1-\delta_n)\widehat{x}_{n},}\\
&{\widehat{y}_{n}=y_{n}-\gamma_n\left[(I-P_Q)y_{n}-B^*(Ax_{n}-By_{n})\right],}\\
&{y_{n+1}=\delta_n v+(1-\delta_n)\widehat{y}_{n}, n\geq 0.}
\end{aligned}\right.
\end{equation*}
This algorithm is actually algorithm \eqref{HCQ} in Corollary \ref{cor1}. Furthermore, the self-adaptive relaxed viscosity-type CQ algorithm  (SRVCQA) \eqref{VRCQ} in Theorem \ref{sep4} is equivalent to the following Halpern algorithm
\begin{equation*}
		\left\{\begin{aligned}
&{\widehat{x}_{n}=x_{n}-\gamma_n\left[(I-P_{C_n})x_{n}+A^*(Ax_{n}-By_{n})\right],}\\
&{\widehat{y}_{n}=y_{n}-\gamma_n\left[(I-P_{Q_n})y_{n}-B^*(Ax_{n}-By_{n})\right],}\\
&{x_{n+1}=\delta_n u+(1-\delta_n)\widehat{x}_{n},}\\
&{y_{n+1}=\delta_n v+(1-\delta_n)\widehat{y}_{n},  n\geq 0.}
\end{aligned}\right.
\end{equation*}
This algorithm is actually algorithm \eqref{RCQ1} in Corollary \ref{cor2}.
\end{remark}

\section{Numerical experiments}\label{se5}
In this section, all codes were written in Matlab R2018b, and ran on a Lenovo ideapad 720S with 1.6 GHz Intel Core i5 processor and 8GB of RAM. We consider some numerical experiments to demonstrate the efficiency of our results and compare them with the existing alternating CQ algorithm (ACQA) in Moudafi \cite{moudafi2014alternating}, simultaneous CQ algorithm (SCQA) in Byrne and Moudafi \cite{byrne2012extensions} and the following Dong et al. algorithm in \cite{dong2015solving}.

\begin{theorem} \emph{\cite{dong2015solving}}
Let $H_1$, $H_2$ and $H_3$ be Hilbert spaces, and $C$, $Q$ be two nonempty closed subsets of $H_1$, $H_2$, respectively. Let $A:H_1\rightarrow H_3$, $B:H_2\rightarrow H_3$ be bounded linear operators, and $A^*$, $B^*$ be the adjoint operators of $A$, $B$, respectively. Let $f$ and $g$ be two contraction mappings with coefficients $\lambda_1\in (0,\frac{\sqrt{2}}{2})$, $\lambda_2\in (0,\frac{\sqrt{2}}{2})$, respectively. The iterative sequence $\{(x_n,y_n)\}$ of the split equality problem \eqref{sep} is generated by the following iterative scheme
\begin{equation}\label{Dong}
\left\{
\begin{aligned}
&{x_{n+1}=\delta_nf(x_n)+(1-\delta_n)P_C(x_n-\gamma_nA^*(Ax_n-By_n)),\quad}\\
&{y_{n+1}=\delta_ng(y_n)+(1-\delta_n)P_Q(y_n+\gamma_nB^*(Ax_n-By_n)),\ \forall n\geq 0,}
\end{aligned}
\right.
\end{equation}
where $\delta_n\in (0,1)$ such that $\lim_{n\rightarrow \infty}\delta_n=0$, $\sum_{n=0}^{\infty}\delta_n=\infty$ and the stepsize \[
\gamma_n=\alpha_n\min \left\{\frac{\|Ax_n-By_n\|^2}{\|A^*(Ax_n-By_n)\|^2},\frac{\|Ax_n-By_n\|^2}{\|B^*(Ax_n-By_n)\|^2}\right\},\ \alpha_n\in(0,1).
\]
Then the iterative sequence $\{(x_n,y_n)\}$ converges in norm to $(x^*,y^*)\in\Omega$.
\end{theorem}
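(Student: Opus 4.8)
The plan is to mirror the viscosity argument already used for Theorem~\ref{sep3}, since \eqref{Dong} is simply the alternating-projection analogue of \eqref{VCQ}. First I would pin down the limit candidate. The map $(x,y)\mapsto P_\Omega(f(x),g(y))$ is a contraction on $H_1\times H_2$: it is the composition of the nonexpansive projection $P_\Omega$ with $(f,g)$, whose coefficient is $\lambda:=\max\{\lambda_1,\lambda_2\}<1/\sqrt{2}<1$. Hence Banach's fixed point theorem furnishes a unique $(x^*,y^*)\in\Omega$ with $(x^*,y^*)=P_\Omega(f(x^*),g(y^*))$, and this is the point toward which convergence is to be proved. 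Throughout I write $w_n=Ax_n-By_n$, $z_n=x_n-\gamma_n A^*w_n$, $\widetilde{x}_n=P_C z_n$ and $\widetilde{y}_n=P_Q(y_n+\gamma_n B^*w_n)$, and set $\theta_n=\|x_n-x^*\|^2+\|y_n-y^*\|^2$.

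Second, I would show the inner projected-gradient step is quasi-nonexpansive toward $(x^*,y^*)$. Applying the firm nonexpansiveness in (P1) with $x^*\in C$ gives $\|\widetilde{x}_n-x^*\|^2\le\|z_n-x^*\|^2-\|z_n-\widetilde{x}_n\|^2$, and expanding $\|z_n-x^*\|^2$ with $\langle A^*w_n,x_n-x^*\rangle=\langle w_n,Ax_n-Ax^*\rangle$ (symmetrically for the $y$-block) makes the cross terms collapse, via $Ax^*=By^*$, into $-2\gamma_n\|w_n\|^2$. The stepsize is the crux: from $\gamma_n\le\alpha_n\|w_n\|^2/\|A^*w_n\|^2$ and $\gamma_n\le\alpha_n\|w_n\|^2/\|B^*w_n\|^2$ one gets $\gamma_n\|A^*w_n\|^2\le\alpha_n\|w_n\|^2$ and $\gamma_n\|B^*w_n\|^2\le\alpha_n\|w_n\|^2$, so that
\[
2\gamma_n\|w_n\|^2-\gamma_n^2\bigl(\|A^*w_n\|^2+\|B^*w_n\|^2\bigr)\ge 2\gamma_n(1-\alpha_n)\|w_n\|^2\ge 0.
\]
Collecting this with the projection residuals into a nonnegative quantity
\[
\Phi_n=2\gamma_n\|w_n\|^2-\gamma_n^2\bigl(\|A^*w_n\|^2+\|B^*w_n\|^2\bigr)+\|z_n-\widetilde{x}_n\|^2+\|y_n+\gamma_n B^*w_n-\widetilde{y}_n\|^2\ge 0,
\]
I obtain $\|\widetilde{x}_n-x^*\|^2+\|\widetilde{y}_n-y^*\|^2\le\theta_n-\Phi_n$.

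Third, I would treat the viscosity combination $x_{n+1}=\delta_n f(x_n)+(1-\delta_n)\widetilde{x}_n$ and its $y$-analogue exactly as in Theorem~\ref{sep3}. Property (P5) together with the contraction bounds and $\lambda<1/\sqrt{2}$ yields boundedness of $\{(x_n,y_n)\}$ and the estimate
\[
\theta_{n+1}\le\bigl(1-\delta_n(1-\lambda^2)\bigr)\theta_n-\bigl(1-\delta_n(1-\lambda^2)\bigr)\Phi_n+2\delta_n\bigl(\langle f(x^*)-x^*,x_{n+1}-x^*\rangle+\langle g(y^*)-y^*,y_{n+1}-y^*\rangle\bigr).
\]
Setting $\eta_n=(1-\delta_n(1-\lambda^2))\Phi_n$, $\tau_n=2(\langle f(x^*)-x^*,x_{n+1}-x^*\rangle+\langle g(y^*)-y^*,y_{n+1}-y^*\rangle)/(1-\lambda^2)$ and $\zeta_n=\delta_n(1-\lambda^2)\tau_n$ reduces this to the two recursions $\theta_{n+1}\le(1-\delta_n')\theta_n+\delta_n'\tau_n$ and $\theta_{n+1}\le\theta_n-\eta_n+\zeta_n$ with $\delta_n'=\delta_n(1-\lambda^2)$, whose sum still diverges; the hypothesis on $\delta_n$ gives $\zeta_n\to0$, so Lemma~\ref{lem2.5} applies once its subsequence hypothesis is checked.

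Finally, the main obstacle is verifying that $\lim_k\eta_{n_k}=0$ forces $\limsup_k\tau_{n_k}\le0$. From $\eta_{n_k}\to0$ and $\Phi_n\ge0$ one wants $\|w_{n_k}\|\to0$ together with vanishing projection residuals; then extracting $x_{n_{k_j}}\rightharpoonup\bar x$, $y_{n_{k_j}}\rightharpoonup\bar y$ that realize the relevant $\limsup$, demiclosedness of $I-P_C$, $I-P_Q$ at $0$ (P2) gives $\bar x\in C$, $\bar y\in Q$, weak lower semicontinuity of $\|\cdot\|^2$ applied to $Ax_{n_{k_j}}-By_{n_{k_j}}\rightharpoonup A\bar x-B\bar y$ gives $A\bar x=B\bar y$, and the projection inequality $\langle f(x^*)-x^*,\bar x-x^*\rangle+\langle g(y^*)-y^*,\bar y-y^*\rangle\le0$ characterizing $(x^*,y^*)=P_\Omega(f(x^*),g(y^*))$, combined with $\|x_{n_k+1}-x_{n_k}\|\to0$ and $\|y_{n_k+1}-y_{n_k}\|\to0$, yields $\limsup_k\tau_{n_k}\le0$. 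The genuinely delicate point, specific to this algorithm's structure, is that the demiclosedness must be applied to $z_n=x_n-\gamma_n A^*w_n$ rather than to $x_n$, which forces me to show $\gamma_{n_k}A^*w_{n_k}\to0$ (and likewise for $B$); because the self-adaptive $\gamma_n$ carries $\|A^*w_n\|^2$, $\|B^*w_n\|^2$ in its denominators it need not be bounded above, so one must instead lean on $\gamma_n\|A^*w_n\|^2\le\alpha_n\|w_n\|^2$ and on the lower bound $\gamma_n\ge\alpha_n/\max\{\|A\|^2,\|B\|^2\}$ coming from boundedness of $A,B$ (to pass from $\eta_{n_k}\to0$ to $\|w_{n_k}\|\to0$, where one also wants $\alpha_n$ kept away from the endpoints of $(0,1)$). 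Treating the degenerate case $Ax_n=By_n$, where the quoted stepsize is undefined, separately as in (E1)/(E3) closes the remaining gap, and Lemma~\ref{lem2.5} then delivers $\theta_n\to0$, i.e. $(x_n,y_n)\to(x^*,y^*)$.
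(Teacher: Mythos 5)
You should first note that the paper never proves this statement at all: it is quoted from \cite{dong2015solving} purely to set up the comparison algorithm used in the numerical experiments of Section \ref{se5}. So the only meaningful benchmark is the paper's proofs of its own Theorems \ref{sep1} and \ref{sep3}, and your outline is essentially that machinery transplanted to Dong et al.'s alternating scheme. The transplant is largely correct: the candidate limit via Banach's fixed point theorem, the firm-nonexpansiveness computation giving $\|\widetilde{x}_n-x^*\|^2+\|\widetilde{y}_n-y^*\|^2\le\theta_n-\Phi_n$ with $\Phi_n\ge 0$ (your one-sided bounds $\gamma_n\|A^*w_n\|^2\le\alpha_n\|w_n\|^2$ and $\gamma_n\|B^*w_n\|^2\le\alpha_n\|w_n\|^2$ are the right substitute for the symmetric bound used under (R1)/(E1)), Lemma \ref{lem2.5}, and — the genuinely new point, which you correctly isolate — the fact that the projection residual lives at $z_n=x_n-\gamma_nA^*w_n$ rather than at $x_n$, so that one needs $\gamma_{n}A^*w_{n}\to 0$, obtained from $\gamma_n^2\|A^*w_n\|^2\le\alpha_n\gamma_n\|w_n\|^2$ together with the lower bound $\gamma_n\ge\alpha_n/\max\{\|A\|^2,\|B\|^2\}$. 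One slip is cosmetic: since $f$ is evaluated at $x_n$ and not at the projected point (unlike \eqref{VCQ}), the surplus $\Phi_n$ enters with coefficient $(1-\delta_n)$, not $(1-\delta_n(1-\lambda^2))$; taking $\eta_n=(1-\delta_n)\Phi_n$ repairs this with no effect on the application of Lemma \ref{lem2.5}.

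The substantive issue is the one you mention only in a parenthesis: with $\alpha_n\in(0,1)$ arbitrary, as the statement is transcribed here, the implication $\eta_{n_k}\to 0\Rightarrow\|w_{n_k}\|\to 0$ is not just hard to verify — it is unavailable, and in fact the statement as transcribed is false, so no proof can close this gap without an added hypothesis. Take $H_1=H_2=H_3=\mathbb{R}$, $C=Q=\mathbb{R}$, $A=B=I$, $f\equiv 0$, $g\equiv 3$ (contractions with any coefficient), $\delta_n=1/(n+2)$ and $\alpha_n=2^{-n}$. Then $\gamma_n=\alpha_n$, the coupling terms $\pm(1-\delta_n)\gamma_n w_n$ are absolutely summable, and a standard perturbation argument for Halpern/viscosity iterations shows $x_n\to 0$ and $y_n\to 3$, so $(x_n,y_n)$ converges to a point outside $\Omega=\{(x,y):x=y\}$. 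Hence "keep $\alpha_n$ away from the endpoints" must be promoted from a wish to a standing assumption, e.g. $0<\varepsilon\le\alpha_n\le 1-\varepsilon$ or $\liminf_n\alpha_n(1-\alpha_n)>0$ (this is the form in which results of this type, including the one in \cite{dong2015solving}, are actually established). With that amendment, your outline is a complete and correct proof strategy; without it, the final step of your argument fails for a reason no argument can fix.
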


\begin{remark}
From the numerical results of Dong et al. algorithm in \cite{dong2015solving}, they considered an good stepsize selection based on their own algorithm \eqref{Dong}, that is,
\[
\gamma_n=0.65\times \min \left\{\frac{\|Ax_n-By_n\|^2}{\|A^*(Ax_n-By_n)\|^2},\frac{\|Ax_n-By_n\|^2}{\|B^*(Ax_n-By_n)\|^2}\right\}.
\]
\end{remark}

Based on the above results, we will carry out the following work and obtain the corresponding numerical results to characterize the effectiveness and superiority of our algorithms.

\textbf{(Test environment)} According to the setting conditions of the split equality problem \eqref{sep}, we choose the following conditions: $H_1=H_2=H_3=R^3$, $C=\{(x_1,x_2,x_3)\in H_1,\ x_2^2+x_3^2-1\leq 0\}$, and $Q=\{(y_1,y_2,y_3)\in H_2,\ y_1^2-y_2+5\leq 0\}$, in addition, $A=\left[\begin{smallmatrix}{\sqrt{5}} & {0} & {0} \\ {0} & {5} & 0  \\ {0} & {0} & {1}\end{smallmatrix}\right]$ and $ B=\left[\begin{smallmatrix}{1} & {0} & {0} \\ {0} & {1} & {0} \\ {0} & {0} & {1}\end{smallmatrix}\right]$.

By the above matrixes $A$ and $B$, we can easily get the corresponding adjoint operators of $A$ and $B$, that is, $A^*=A^T$ and $B^*=B^T$. Under the above assumption, it is easy to prove that $(x^*,y^*)$ is a unique solution of problem \eqref{sep}, where $x^*=(0,1,0)$, $y^*=(0,5,0)$. The norm $\|Ax_n-By_n\|^2$ as an error estimate and denoted by $E_n$ for all of the following examples. Next, we study and analyze our numerical experiments in such an environment.

\begin{example}\label{exam1}
In the above test environment, we will analyze the convergence of algorithm \eqref{VCQ} (SVCQA) in {Theorem \ref{sep3}}. Firstly, initial points $x_0$, $y_0$ generated randomly in $R^3$,
\[
\gamma_n=\alpha_n\min\left\{1,\frac{\|Ax_n-By_n\|^2}{\|A^*(Ax_{n}-By_{n})\|^2+\|B^*(Ax_{n}-By_{n})\|^2}\right\}\ \text{ with }\ \alpha_n=\frac{3n}{3n+1},
\]
and take contraction mappings $f(x)=0.5 x$, $g(y)=0.5 y$. We consider the following four cases of the parameter $\delta_n$: (a) $ \delta_n=\frac{1}{n+1}$, (b) $\delta_n=\frac{1}{n+10}$, (c) $\delta_n=\frac{1}{n+30}$, (d) $\delta_n=\frac{1}{n+50}$.
The numerical results of algorithm \eqref{VCQ} (SVCQA) for any initial points $x_0$, $y_0$ as shown in {Figure \ref{fig1}}.
\end{example}

\begin{figure}[htbp]
	\centering  
	\subfigure[Case I]{
		\label{Fig1sub1}
		\includegraphics[width=0.45\textwidth]{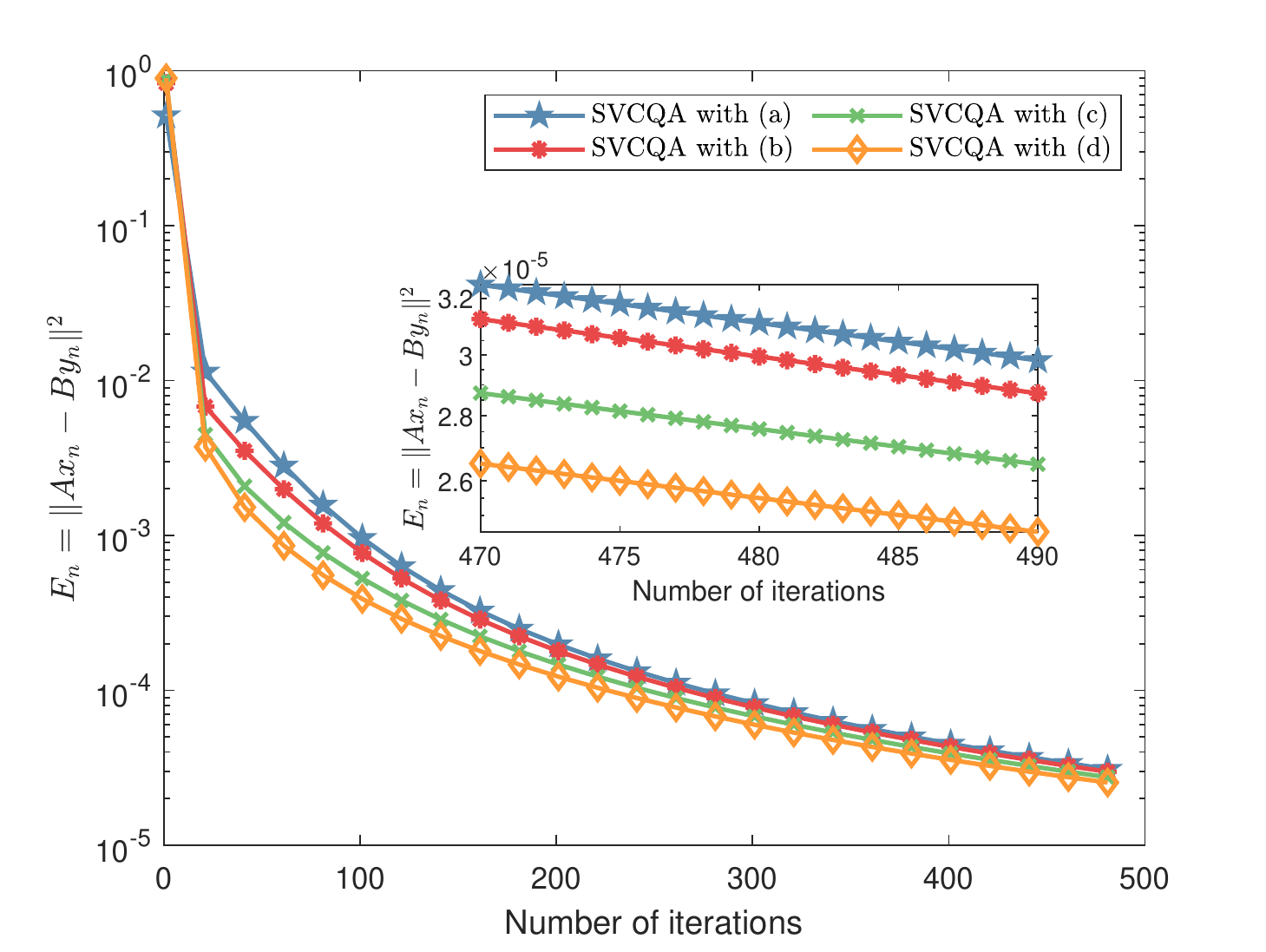}}
	\subfigure[Case II]{
		\label{Fig1sub2}
		\includegraphics[width=0.45\textwidth]{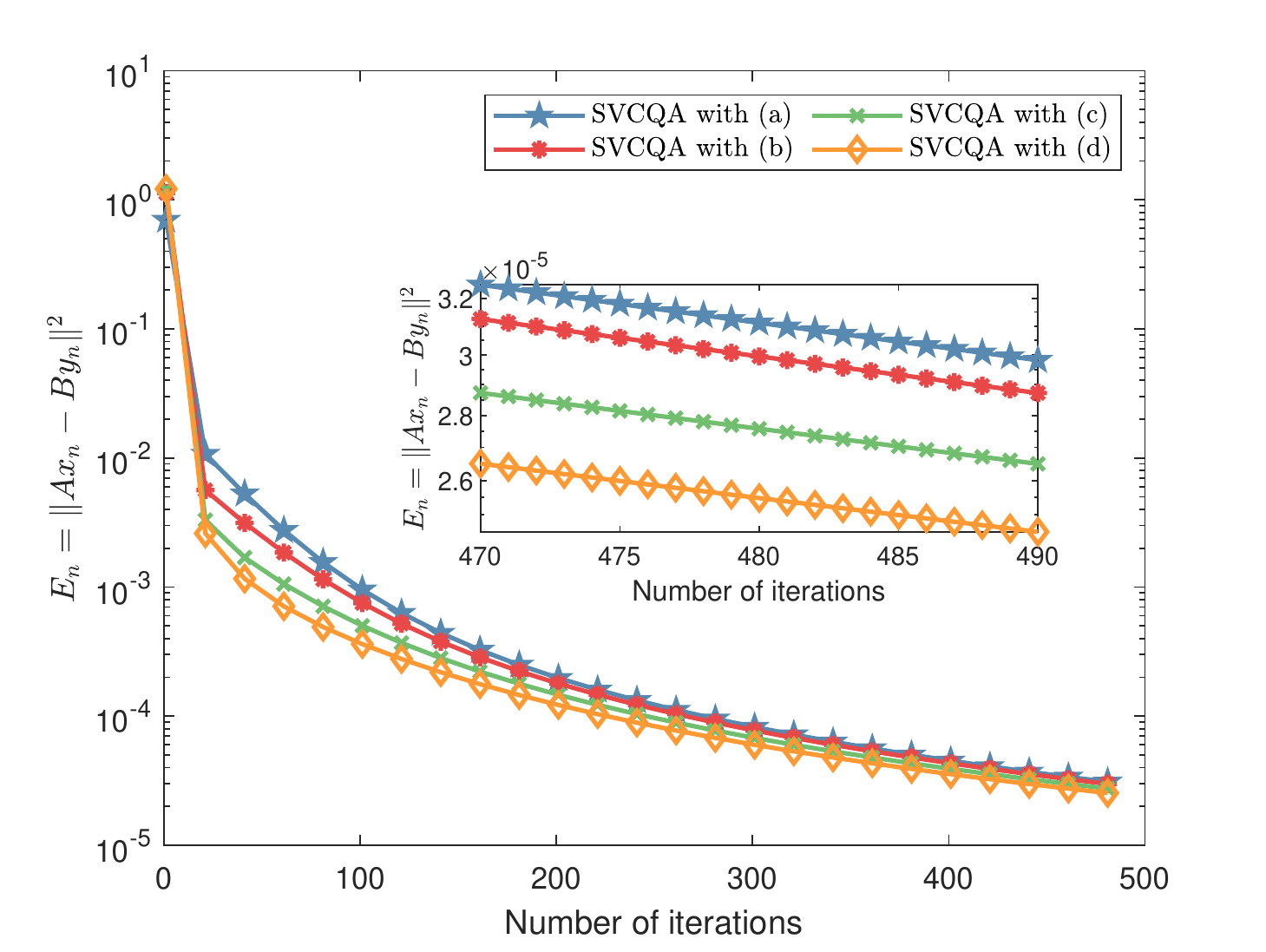}}
	\subfigure[Case III ]{
		\label{Fig1sub3}
		\includegraphics[width=0.45\textwidth]{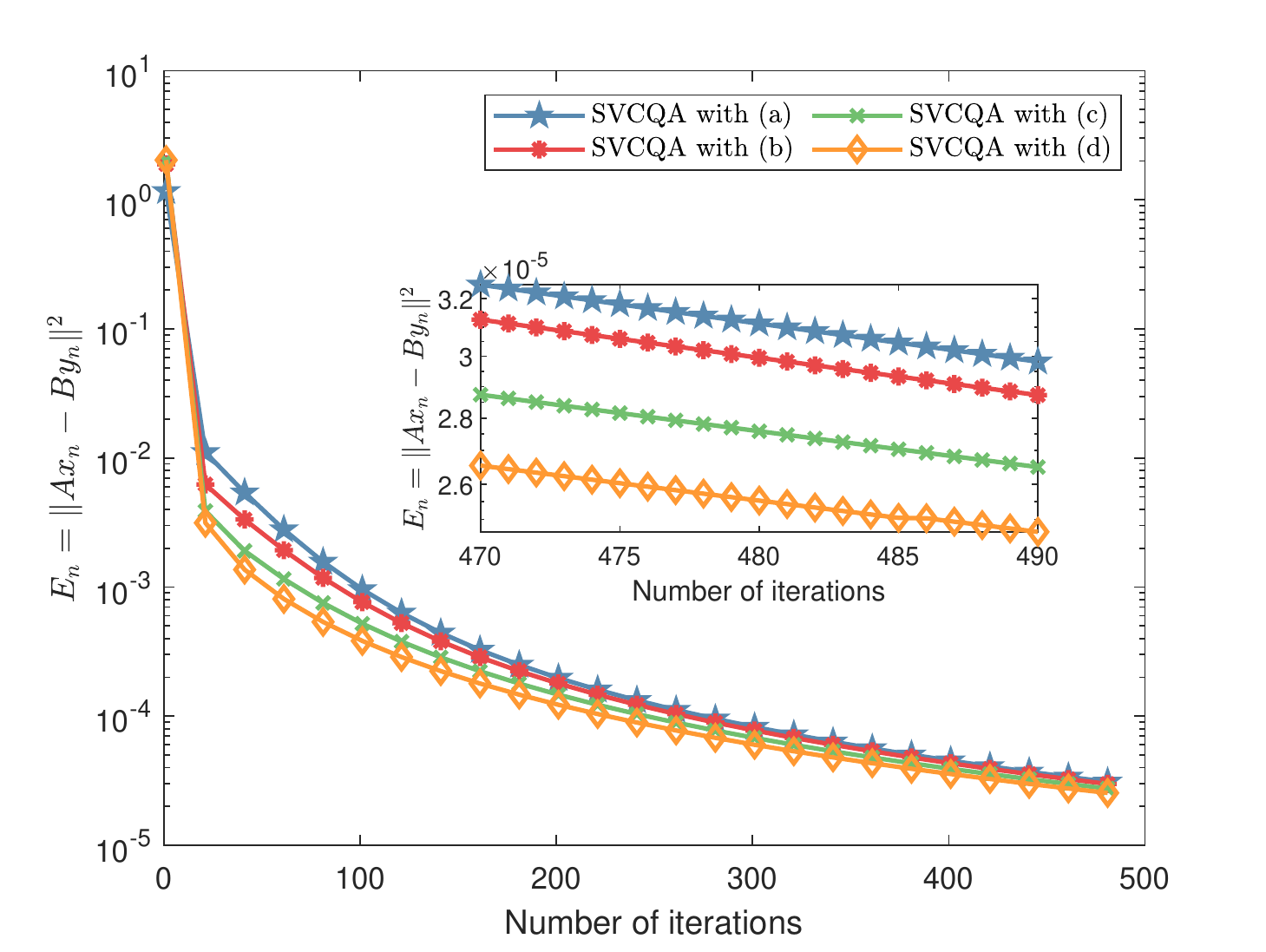}}
	\subfigure[Case IV ]{
		\label{Fig1sub4}
		\includegraphics[width=0.45\textwidth]{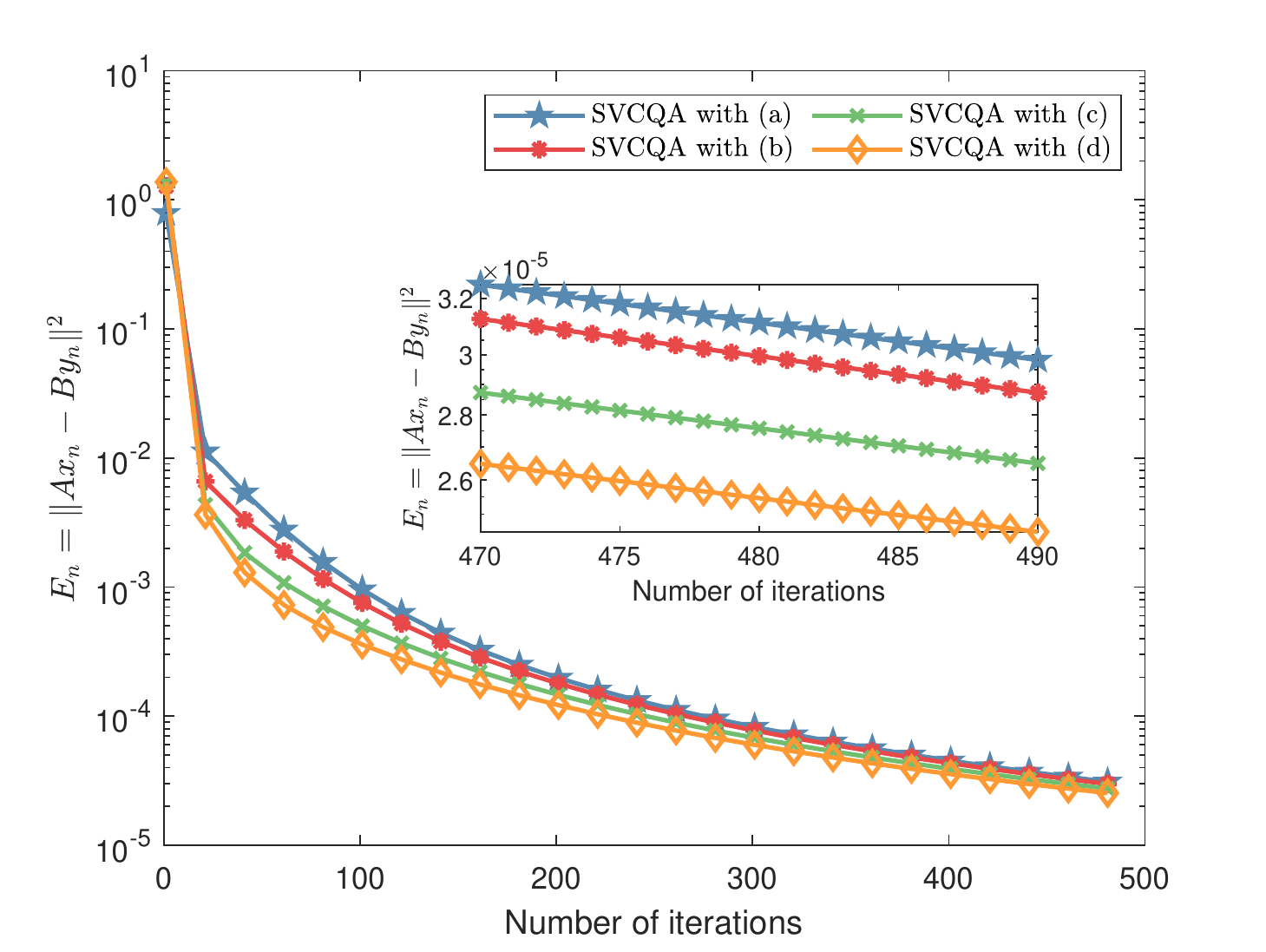}}
	\caption{The numerical results of four parameter choices of SVCQA}
	\label{fig1}
\end{figure}

\begin{remark}
In Figure \ref{fig1}, we can easily see that all the results are valid and convergent. Under the same number of iterations, the error accuracy of the fourth setting $(d)$ is better than all other cases for the different parameters $\delta_n$ in algorithm \eqref{VCQ}. In view of this, we choose $\delta_n=\frac{1}{n+50}$ in algorithm \eqref{VCQ}.
\end{remark}

\begin{example}\label{exam2}
For algorithm \eqref{VCQ} (SVCQA) in {Theorem \ref{sep3}}, we further consider the choice of contraction mappings $f$ and $g$. Firstly, initial points $x_0$, $y_0$ generated randomly in $R^3$,
\[
\gamma_n=\alpha_n\min\left\{1,\frac{\|Ax_n-By_n\|^2}{\|A^*(Ax_{n}-By_{n})\|^2+\|B^*(Ax_{n}-By_{n})\|^2}\right\}\ \text{ with }\ \alpha_n=\frac{3n}{3n+1},
\]
we choose directly the parameter $\delta_n=\frac{1}{n+50}$ and consider different contraction mappings $f(x)=\lambda_1x$ and $g(y)=\lambda_2y$, $\lambda=\lambda_1=\lambda_2\in [0, {1}/{\sqrt{2}})$ for any $x\in H_1$, $y\in H_2$. The numerical results of algorithm \eqref{VCQ} (SVCQA) for any initial points $x_0$, $y_0$ are shown in {Figure \ref{fig2}}.
\end{example}

\begin{figure}[htbp]
	\centering  
	\subfigure[Case I]{
		\label{Fig2sub1}
		\includegraphics[width=0.45\textwidth]{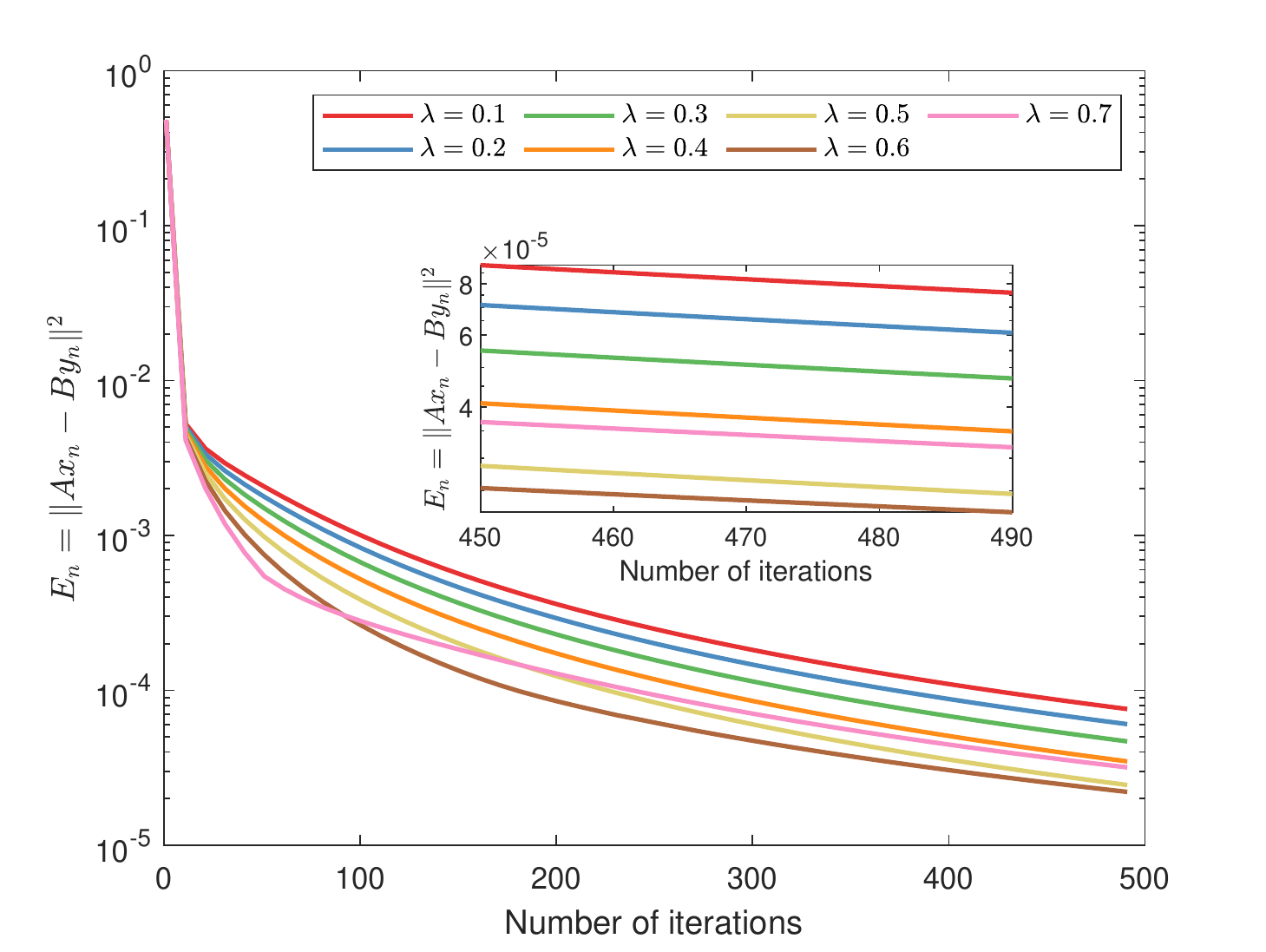}}
	\subfigure[Case II]{
		\label{Fig2sub2}
		\includegraphics[width=0.45\textwidth]{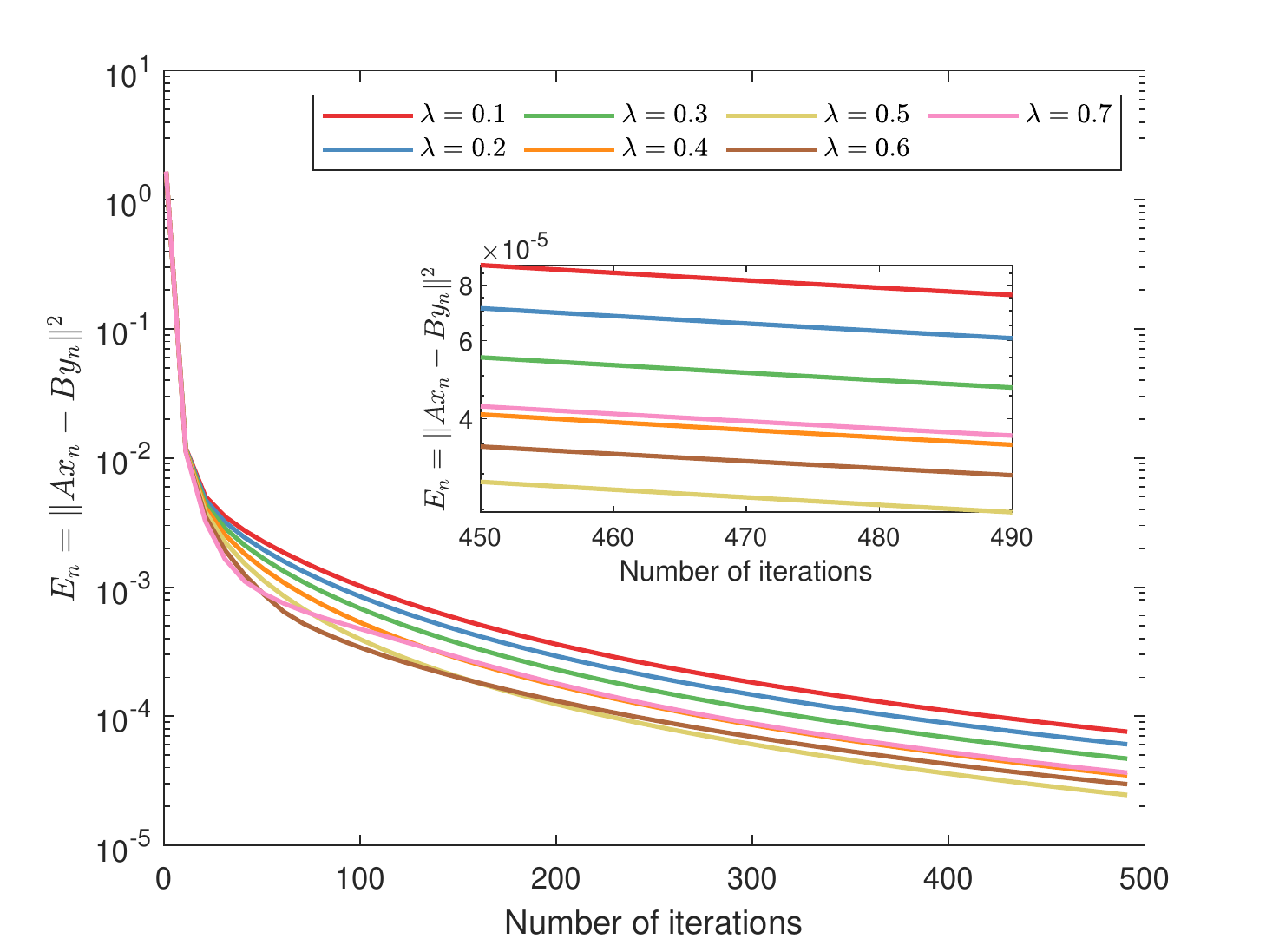}}
	\subfigure[Case III ]{
		\label{Fig2sub3}
		\includegraphics[width=0.45\textwidth]{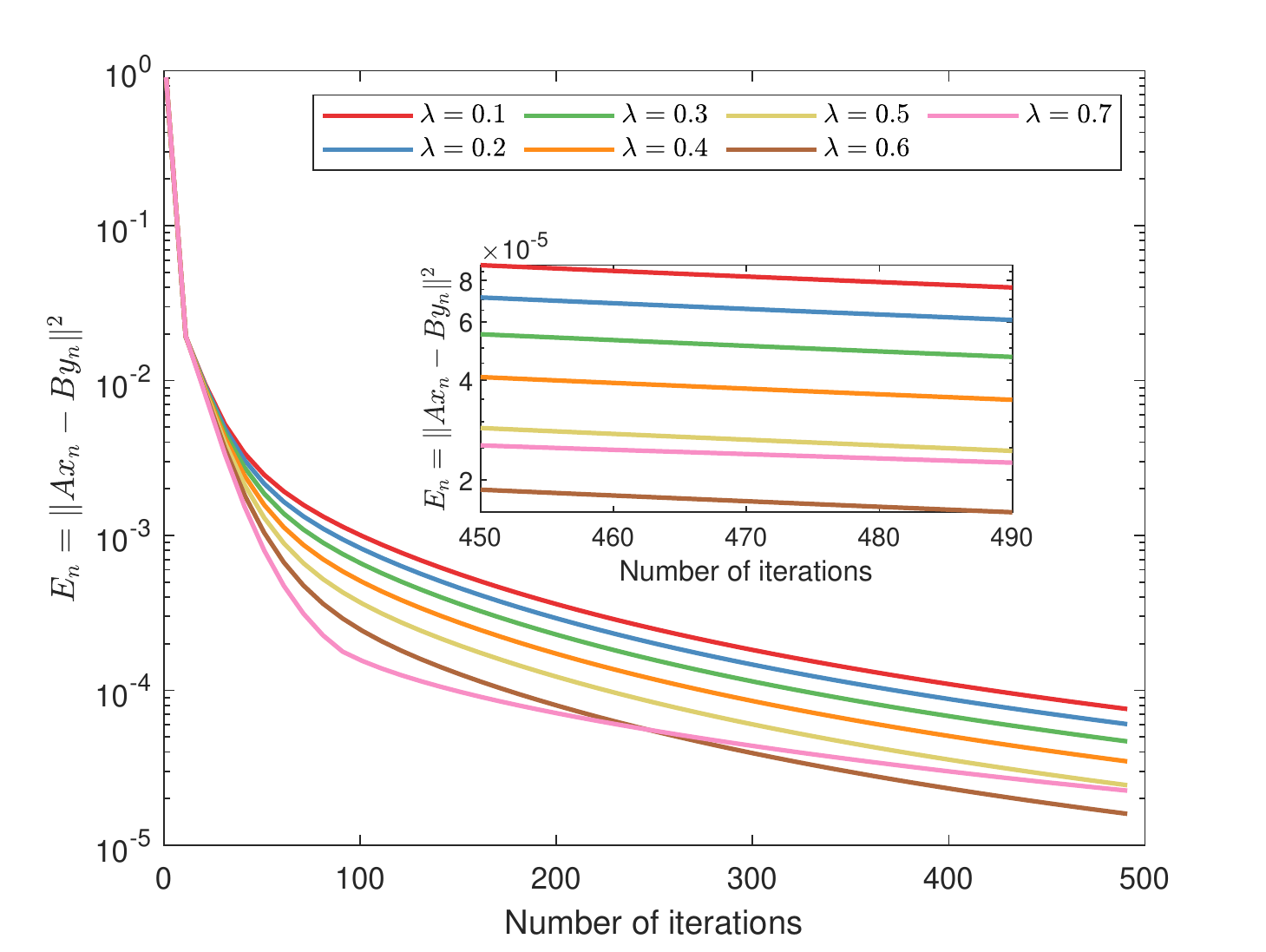}}
	\subfigure[Case IV ]{
		\label{Fig2sub4}
		\includegraphics[width=0.45\textwidth]{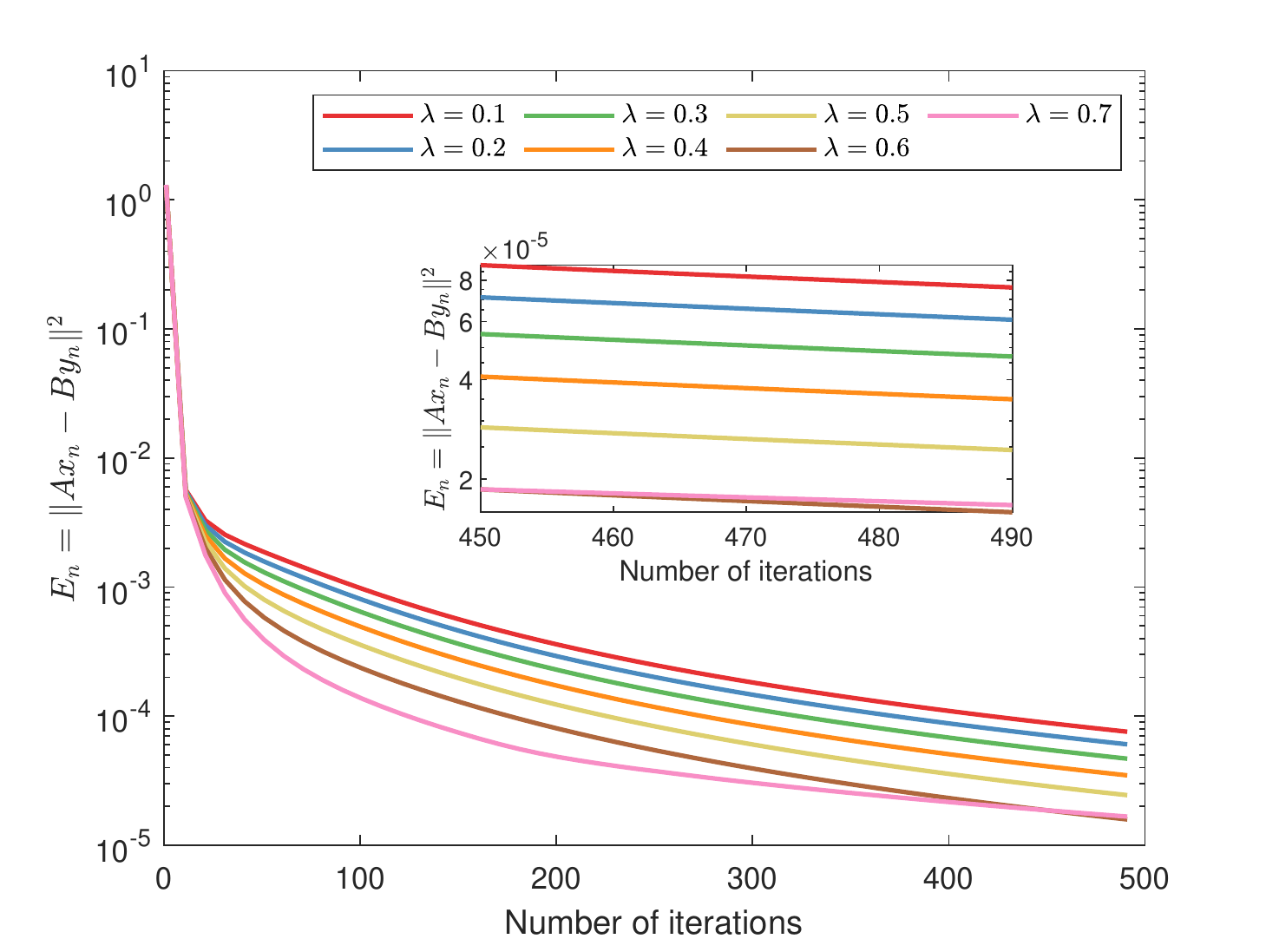}}
\caption{The numerical results of the different contraction mappings of SVCQA}
	\label{fig2}
\end{figure}

\begin{remark}
\begin{itemize}
\item In {Figure \ref{fig2}}, we can see that all the results are valid and convergent. {Figure \ref{fig2}} shows that the coefficients $\lambda_1$ and $\lambda_2$ of contraction mappings $f$ and $g$ have better convergence results in the range $(0.5,0.7)$ for any initial points under the parameter $\delta_n=\frac{1}{n+50}$.
\item By virtue of the numerical results of {Example 5.1} and {Example 5.2}, we have analyzed the different choices of parameters $\delta_n$ and contractions mapping in the algorithm \eqref{VCQ} (SVCQA) in {Theorem \ref{sep3}}. Further, we have got the best results, when
    \[
     \delta_n=\frac{1}{n+50},\ f(x)=0.6x,\ g(y)=0.6y.
    \]
\end{itemize}
\end{remark}

\begin{example}\label{exa3}
For the four algorithms mentioned in this paper: alternating CQ algorithm {(ACQA)} in Moudafi {\cite{moudafi2014alternating}} (i.e., algorithm \eqref{ACQA}), simultaneous CQ algorithm {(SCQA)} in Byrne and Moudafi {\cite{byrne2012extensions}} (i.e., algorithm \eqref{SCQA}), Dong et al. algorithm in \cite{dong2015solving} (i.e., algorithm \eqref{Dong}) and our algorithm \eqref{VCQ} (SVCQA). We compare the number of iterations of four algorithms with different initial points at the same iteration error accuracy. Firstly, we set the corresponding parameters as follows:
\begin{itemize}
\item Take the parameter $\gamma_n$ in ACQA and SCQA as $0.9 \min(\frac{1}{\|A\|^2},\frac{1}{\|B\|^2})$ and $0.9\frac{2}{\|A\|^2+\|B\|^2}$, respectively;
\item Take the parameters in  algorithm \eqref{Dong} as $\delta_n=\frac{1}{n+50}$, $f(x)=0.6 x$, $g(y)=0.6 y$ and
\[
\gamma_n=0.65 \min \left\{\frac{\|Ax_n-By_n\|^2}{\|A^*(Ax_n-By_n)\|^2},\frac{\|Ax_n-By_n\|^2}{\|B^*(Ax_n-By_n)\|^2}\right\}.
\]
\item Take the parameters in  algorithm \eqref{VCQ} (SVCQA) as $\delta_n=\frac{1}{n+50}$, $f(x)=0.6 x$, $g(y)=0.6 y$ and
    \[
    \gamma_n=\alpha_n\min\left\{1,\frac{\|Ax_n-By_n\|^2}{\|A^*(Ax_{n}-By_{n})\|^2+\|B^*(Ax_{n}-By_{n})\|^2}\right\}\ \text{with}\ \alpha_n=\frac{3n}{3n+1}.
    \]
\end{itemize}
At four different initial values, {Figure \ref{fig3}} shows the convergence behavior of the iteration error $E_n$ of the four algorithms under the same number of iterations, and {Table \ref{tab1}} shows the number of iterations of the four algorithms at the same iteration error accuracy.
\end{example}

\begin{figure}[htbp]
	\centering  
	\subfigure[Case I]{
		\label{Fig3sub1}
		\includegraphics[width=0.4\textwidth]{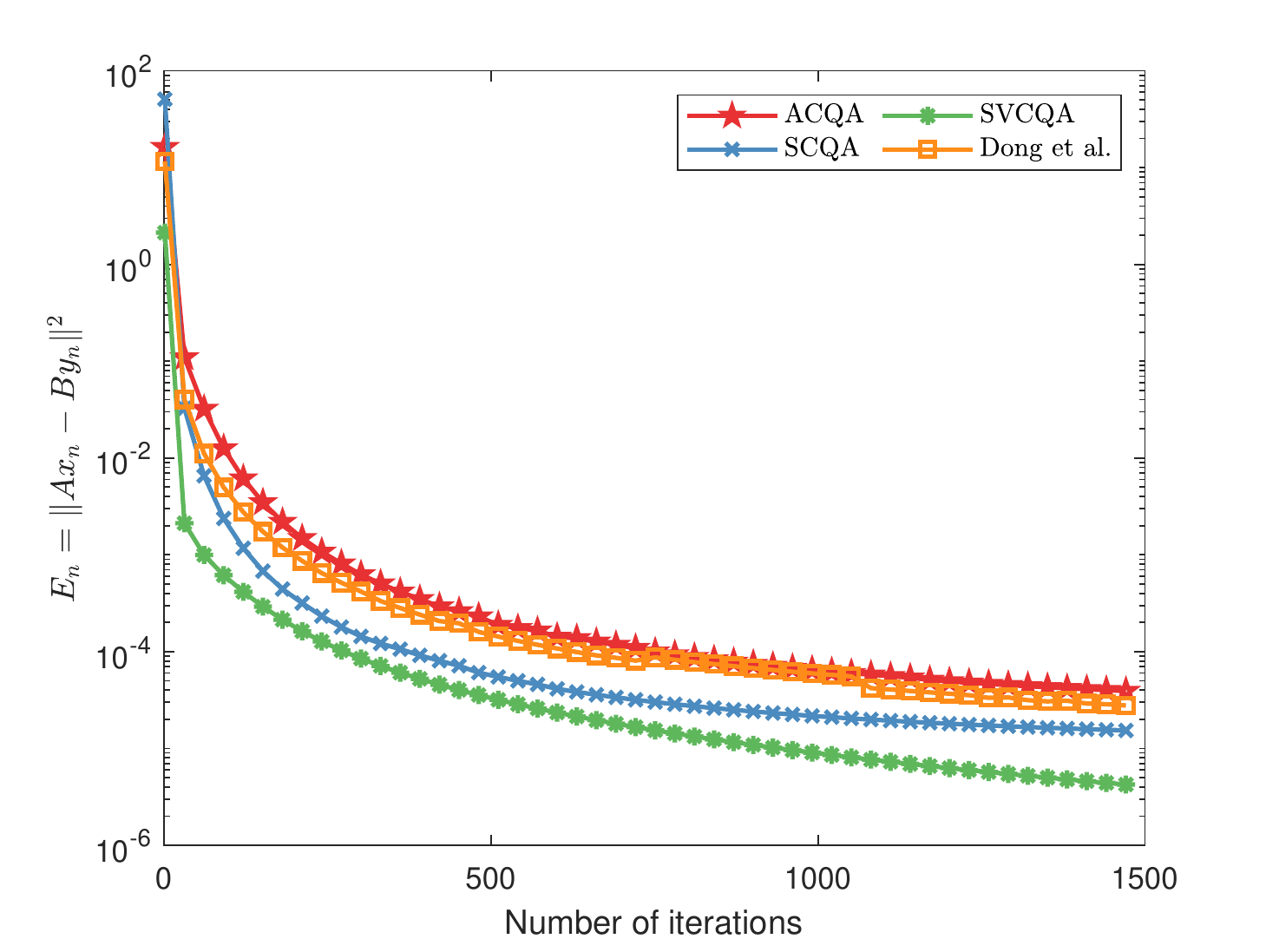}}
	\subfigure[Case II]{
		\label{Fig3sub2}
		\includegraphics[width=0.4\textwidth]{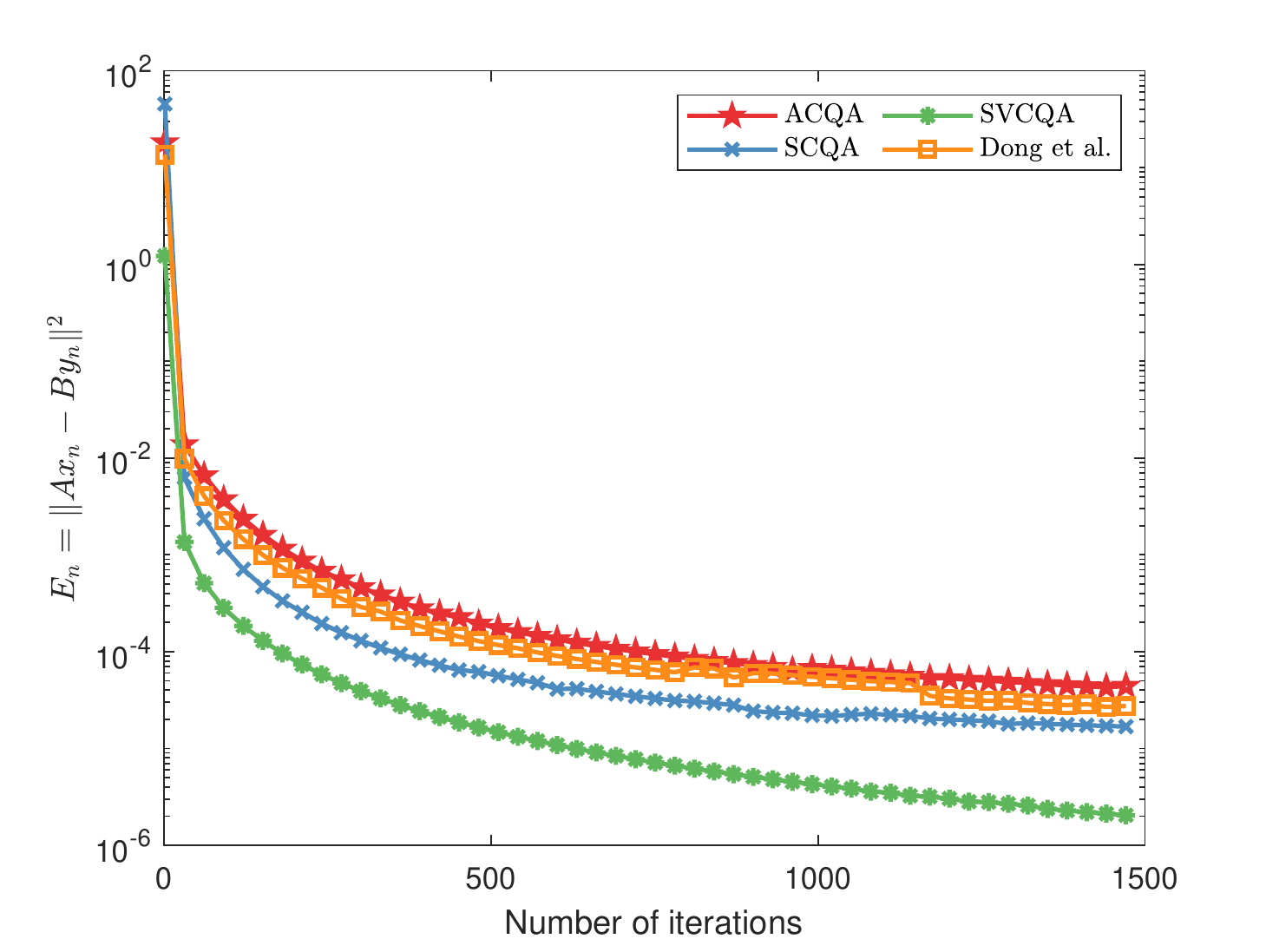}}
	\subfigure[Case III ]{
		\label{Fig3sub3}
		\includegraphics[width=0.4\textwidth]{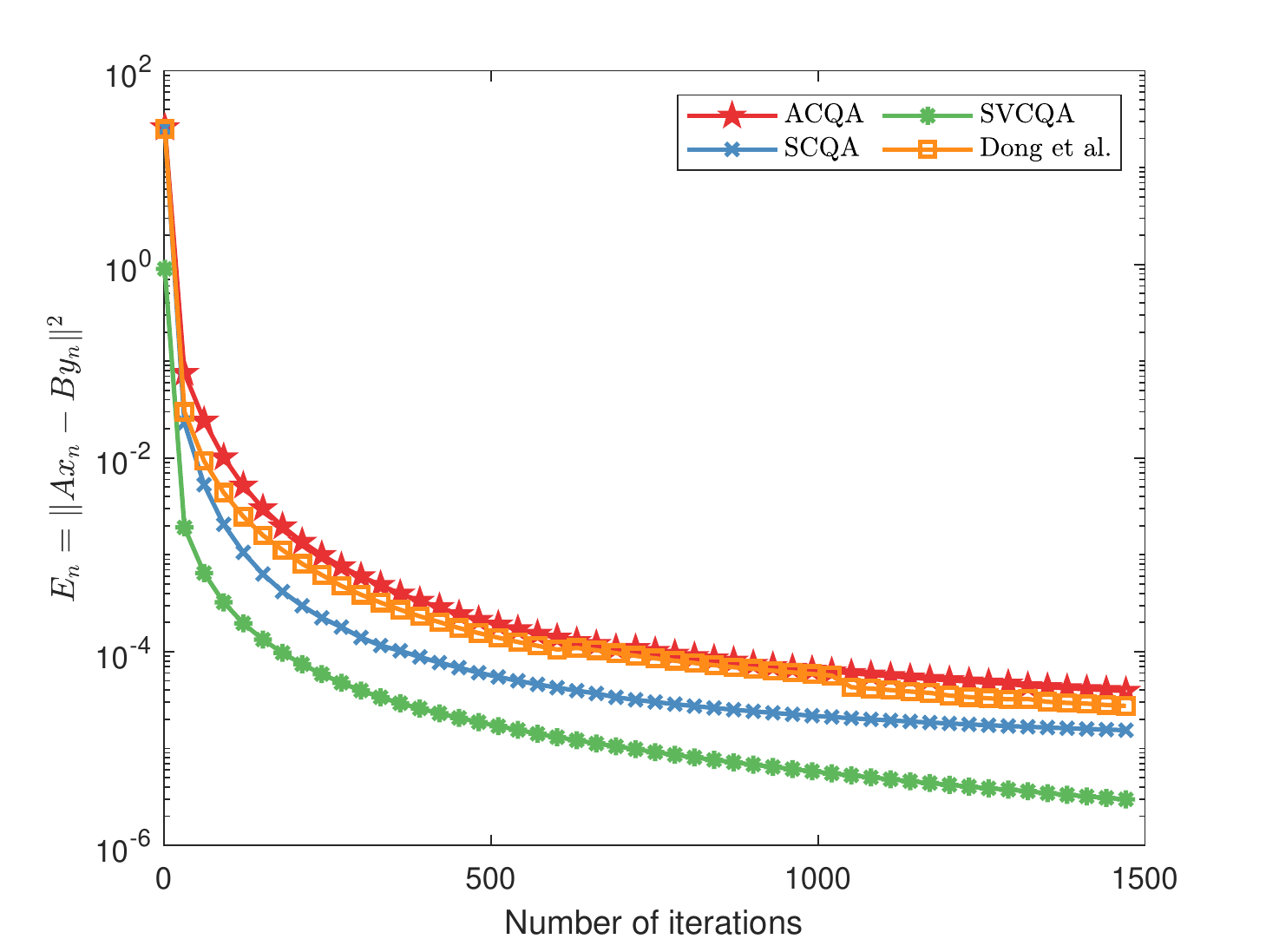}}
	\subfigure[Case IV ]{
		\label{Fig3sub4}
		\includegraphics[width=0.4\textwidth]{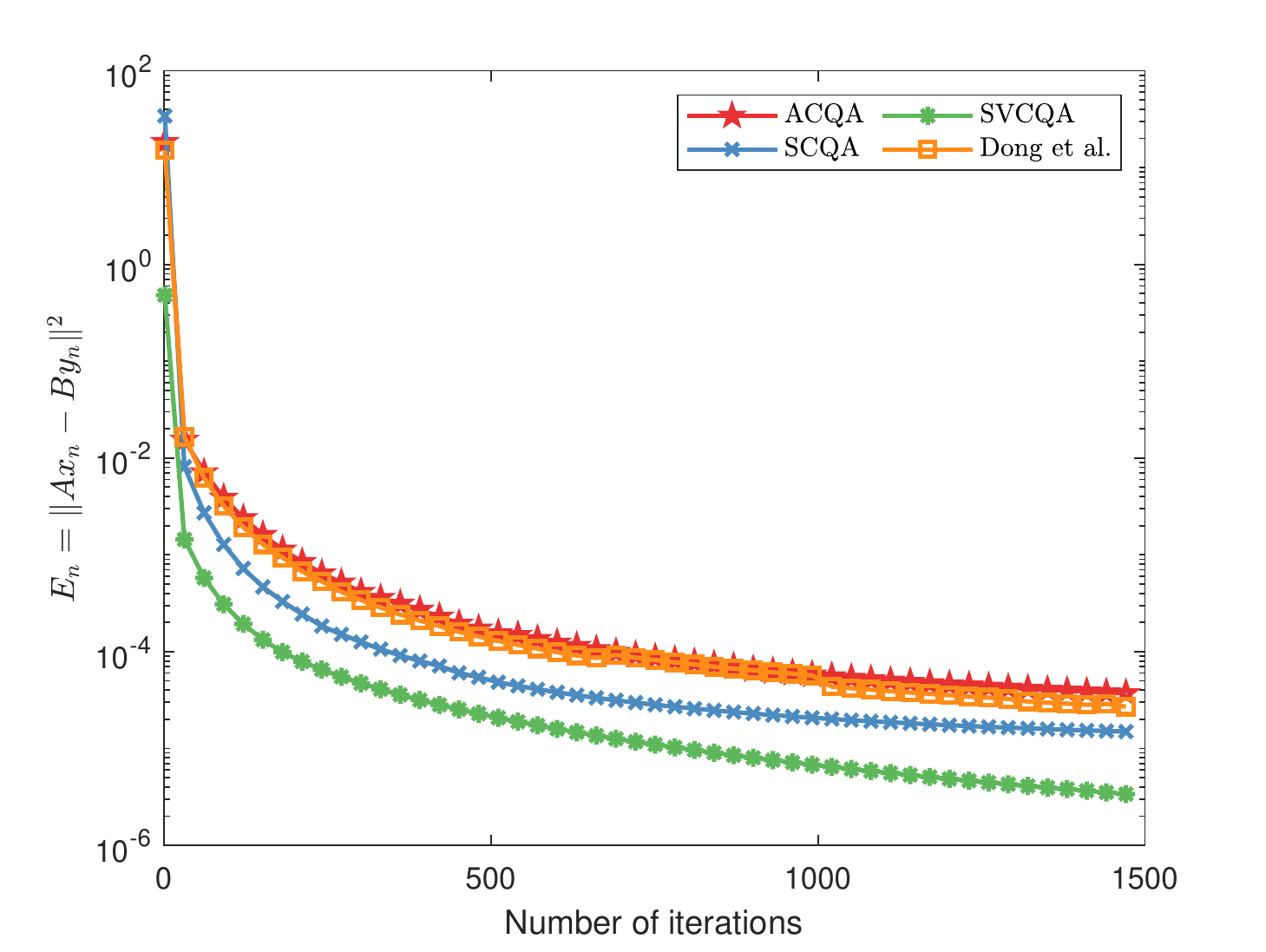}}
	\caption{The numerical results of four algorithms}\label{fig3}
\end{figure}

\begin{table}[tp]
\newcommand{\tabincell}[2]{\begin{tabular}{@{}#1@{}}#2\end{tabular}}
  \centering
  \caption{Number of iterations for different error estimates}
  \label{tab1}
    \begin{tabular}{cccccc}
    \toprule
    \multirow{2}{*}{initial point $(x_0,y_0)$}&
    \multirow{2}{*}{$\|Ax_n-By_n\|^2$}&
    \multicolumn{3}{c}{ Number of iterations}\\
    \cmidrule(lr){3-6}
    &&SVCQA&ACQA&SCQA&Dong et al.\\
    \midrule
    \multirow{3}{*}{ \tabincell{c}{$x_0=(0.7922;0.9595;0.6557)^\mathrm{T}$\\
    $y_0=(0.0357;0.8491;0.9340)^\mathrm{T}$}}&$10^{-2}$&11&100&52&65\\
    &$10^{-3}$&61&249&129&197\\
    &$10^{-4}$&276&750&373&627\\
     \midrule
    \multirow{3}{*}{ \tabincell{c}{$x_0=(0.6787;0.7577;0.7431)^\mathrm{T}$\\
    $y_0=(0.3922;0.6555;0.1712)^\mathrm{T}$}}&$10^{-2}$&8&43&21&31\\
    &$10^{-3}$&39&196&100&151\\
    &$10^{-4}$&177&721&349&564\\
     \midrule
    \multirow{3}{*}{ \tabincell{c}{$x_0=(0.7060;0.0318;0.2769)^\mathrm{T}$\\
    $y_0=(0.0462;0.0971;0.8235)^\mathrm{T}$}}&$10^{-2}$&10&92&47&59\\
    &$10^{-3}$&48&241&124&192\\
    &$10^{-4}$&178&757&366&624\\
     \midrule
    \multirow{3}{*}{ \tabincell{c}{$x_0=(0.1190;0.4984;0.9597)^\mathrm{T}$\\
    $y_0=(0.3404;0.5853;0.2238)^\mathrm{T}$}}&$10^{-2}$&7&47&27&45\\
    &$10^{-3}$&42&193&104&173\\
    &$10^{-4}$&180&686&343&596\\
    \bottomrule
    \end{tabular}
\end{table}

\begin{remark}
From Figure \ref{fig3} and Table \ref{tab1}, we can see that our proposed algorithm \eqref{VCQ} (SVCQA) outperforms the alternating CQ algorithm {(ACQA)}, simultaneous CQ algorithm {(SCQA)}, Dong et al. algorithm \eqref{Dong} in both  error accuracy and number of iterations.
\end{remark}

\section{Conclusion}

The first conclusion from Sections \ref{se3} and \ref{se4} is that we propose four self-adaptive CQ algorithms by using the methods of the Halpern algorithm and the viscosity algorithm for solving the SEP under the condition of the self-adaptive stepsize sequence. A point should be stressed is that such a self-adaptive stepsize sequence does not depend on the prior knowledge of operator norms. The second conclusion from the numerical results in Section \ref{se5} is that the convergence of our algorithm is validity and authenticity. Meanwhile, our proposed self-adaptive viscosity-type CQ algorithm \eqref{VCQ} (SVCQA) improves and extends the existing results.


\begin{thebibliography}{}

\bibitem{ansariaa2020}
Ansari,  Q.H.,  Islam, M.,  Yao, J.C.: Nonsmooth variational inequalities on Hadamard manifolds.  Appl. Anal.  99, 340--358 (2020)

\bibitem{attouch2008alternating}
Attouch, H., Bolte, J., Redont, P., Soubeyran, A.: Alternating proximal algorithms for weakly coupled convex minimization problems. Applications to dynamical games and PDE's. J. Convex Anal. 15, 485-506 (2008)

\bibitem{s1}
Byrne, C.: Iterative oblique projection onto convex sets and the split feasibility problem. Inverse Problems  18, 441--453 (2002)

\bibitem{byrne2012extensions}
Byrne, C., Moudafi, A.: Extensions of the CQ algorithm for the split feasibility and split equality problems.  hal-00776640-version 1 (2013)

\bibitem{censor2006unified}
Censor, Y., Bortfeld, T., Martin, B., Trofimov, A.: A unified approach for inversion problems in intensity-modulated radiation therapy. Phys. Med. Biol. 51, 2353--2365 (2006)


\bibitem{s2}
Censor, Y., Elfving, T.: A multiprojection algorithm using Bregman projections in a product space. Numer. Algorithms  8, 221--239 (1994)

\bibitem{qin2009convergence}
Chang, S.S.,  Wen C.F., Yao J.C.: Common zero point for a finite family of inclusion problems of accretive mappings in Banach spaces. Optimization 67, 1183--1196 (2018)

\bibitem{Chang209for}
Chang, S.S., Wen, C.F., Yao, J.C.: Forward–Backward splitting method for solving a system of quasi-variational inclusions. Bull. Malays. Math. Sci. Soc. 42, 2169–-2189 (2019).


\bibitem{qin2008strong}
Cho, S.Y.: Strong convergence analysis of a hybrid algorithm for nonlinear operators in a Banach space.  J. Appl. Anal. Comput. 8, 19--31 (2018)


\bibitem{syoaml2011}
Cho, S.Y., Kang, S.M.: Approximation of fixed points of pseudocontraction semigroups based on a viscosity iterative process.  Appl. Math. Lett. 24, 224--228 (2011)

\bibitem{choacta2012}
Cho, S.Y.,  Kang, S.M.: Approximation of common solutions of variational inequalities via strict pseudocontractions.  Acta Math. Sci. 32, 1607--1618 (2012)

\bibitem{chojia2013}
Cho, S.Y.,  Li, W., Kang, S.M.: Convergence analysis of an iterative algorithm for monotone operators.  J. Inequal. Appl.  2013, 199 (2013)

\bibitem{dong2015solving}
Dong, Q.L., He, S.N., Zhao, J.: Solving the split equality problem without prior knowledge of operator norms. Optimization 64, 1887--1906 (2015)

\bibitem{halpern1967fixed}
Halpern, B.: Fixed points of nonexpanding maps. Bull. Amer. Math. Soc. 73, 957--961 (1967)

\bibitem{he2013solving}
He, S.N., Yang, C.P.: Solving the variational inequality problem defined on intersection of finite level sets. Abstr. Appl. Anal. 2013, (2013)


\bibitem{kraikaew2014strong}
Kraikaew, R., Saejung, S.: Strong convergence of the halpern subgradient extragradient method for solving variational inequalities in Hilbert spaces. J. Optim. Theory Appl.  163, 399--412 (2014)

\bibitem{luo2020}
Luo, Y., Shang, M., Tan, B.: A general inertial viscosity type method for nonexpansive mappings and its applications in signal processing.  Mathematics 8, 288 (2020)

\bibitem{moudafi2013relaxed}
Moudafi, A.: A relaxed alternating CQ-algorithm for convex feasibility problems. Nonlinear Anal. 79, 117--121 (2013)

\bibitem{moudafi2014alternating}
Moudafi, A.: Alternating CQ-algorithm for convex feasibility and split fixed-point problems. J. Nonlinear Convex Anal. 15, 809--818 (2014)

\bibitem{sahuo}
Sahu, D.R., Yao, J.C., Verma, M., Shukla, K.K.: Convergence rate analysis of proximal gradient methods with applications to composite minimization problems. Optimization 10.1080/02331934.2019.1702040 (2020)

\bibitem{tan2020strong}
Tan, B., Zhou, Z., Li, S.: Strong convergence of modified inertial mann algorithms for nonexpansive mappings. Mathematics 8, 462 (2020)

\bibitem{takahashi2007viscosity}
Takahashi, S., Takahashi, W.: Viscosity approximation methods for equilibrium problems and fixed  point problems in Hilbert spaces.
 J. Math. Anal. Appl.  331, 506--515 (2007)

\bibitem{takahashi2017mann}
Takahashi, W.: Mann and Halpern iterations for the split common fixed point problem in Banach spaces. Linear Nonlinear Anal. 3, 1--18 (2017)


\bibitem{takahashi2018weak}
Takahashi, W.: Weak and strong convergence theorems for new demimetric mappings and the split common fixed point problem in Banach spaces. Numer. Funct. Anal. Optim.  39, 1011--1033 (2018)

\bibitem{takahahsifpt2018}
Takahashi, W., Wen, C.F., Yao, J.C.: The shrinking projection method for a finite family of demimetric mappings with variational inequality problems in a Hilbert space. Fixed Point Theory 19, 407--419 (2018)

\bibitem{moudafi2000viscosity}
Takahahsi, W., Yao J.C.: The split common fixed point problem for two finite families of nonlinear mappings in Hilbert spaces.  J. Nonlinear Convex Anal. 20, 173--195 (2019)

\bibitem{wang2017new}
Wang, F.: A new iterative method for the split common fixed point problem in Hilbert spaces. Optimization 66, 407--415 (2017)

\bibitem{xu2006variable}
Xu, H.K. A variable Krasnosel'skii--Mann algorithm and the multiple-set split feasibility problem. Inverse problems 22, 2021 (2006)

\bibitem{yao2018self}
Yao, Y.H., Liou, Y.C., Postolache, M.: Self-adaptive algorithms for the split problem of the demicontractive operators. Optimization  67, 1309--1319 (2018)








\end{thebibliography}
\end{document}